\newcommand{\R}{\mathds R}
\newcommand{\chilow}[1]{\chi_{\lower2pt\hbox{$\scriptstyle#1$}}}
\newcommand{\Clopc}[1]{\mathop{\mathrm{Clop}_{\mathrm c}^{#1}}}
\DeclareMathOperator{\LEP}{LEP}
\DeclareMathOperator{\Fin}{Fin}
\DeclareMathOperator{\Int}{int}
\DeclareMathOperator{\atom}{atom}
\DeclareMathOperator{\rc}{rc}
\DeclareMathOperator{\cb}{cb}
\DeclareMathOperator{\clop}{Clop}
\DeclareMathOperator{\clopc}{Clop_c}
\DeclareMathOperator{\MA}{MA}
\title{Local extension property for finite height spaces}
\author{Claudia Correa}
\address{Centro de Matem\'atica, Computa\c c\~ao e Cogni\c c\~ao,\hfill\break\indent Universidade Federal do ABC, Brazil}
\email{claudiac.mat@gmail.com} \urladdr{http://professor.ufabc.edu.br/\~{}claudia.correa}
\author{Daniel V. Tausk}
\address{Departamento de Matem\'atica,\hfill\break\indent Universidade de S\~ao Paulo, Brazil}
\email{tausk@ime.usp.br} \urladdr{http://www.ime.usp.br/\~{}tausk}
\subjclass[2010]{46E15, 06E05, 54G12}
\keywords{Banach spaces of continuous functions; twisted sums of Banach spaces; $\Psi$-spaces; finite boolean algebras}
\date{June 11th, 2018}
\begin{document}

\theoremstyle{plain}\newtheorem{teo}{Theorem}[section]
\theoremstyle{plain}\newtheorem{prop}[teo]{Proposition}
\theoremstyle{plain}\newtheorem{lem}[teo]{Lemma}
\theoremstyle{plain}\newtheorem{cor}[teo]{Corollary}
\theoremstyle{definition}\newtheorem{defin}[teo]{Definition}
\theoremstyle{remark}\newtheorem{rem}[teo]{Remark}
\theoremstyle{plain} \newtheorem{assum}[teo]{Assumption}
\theoremstyle{definition}\newtheorem{example}[teo]{Example}
\theoremstyle{plain}\newtheorem*{conjecture}{Conjecture}

\begin{abstract}
We introduce a new technique for the study of the local extension property ($\LEP$) for boolean algebras and we use it to show that the clopen algebra of every compact Hausdorff space $K$ of finite height has $\LEP$. This implies, under appropriate additional assumptions on $K$ and Martin's Axiom, that every
twisted sum of $c_0$ and $C(K)$ is trivial, generalizing a recent result by Marciszewski and Plebanek.
\end{abstract}

\maketitle

\begin{section}{Introduction}

The purpose of this article is threefold: first (Section~\ref{sec:main}), we present a generalization of the construction of $\Psi$-spaces (\cite{Hrusak,Mrowka}) that is relevant for understanding the clopen algebra of a compact Hausdorff scattered space. The second purpose (Section~\ref{sec:pointdiagram}) is to introduce a combinatorial method for the study of embeddings of pairs of finite boolean algebras, which is useful to establish that a given boolean algebra has the local extension property ($\LEP$). Finally (Section~\ref{sec:mainresults}), we use both
techniques to prove that the clopen algebra of every compact Hausdorff space of finite height has $\LEP$.

The local extension property for boolean algebras (see Definition~\ref{definLep} and Remark~\ref{remLEP}) was introduced by Marciszewski and Plebanek in \cite{Plebanek} with the purpose of proving that,
under Martin's Axiom ($\MA$), it holds that every twisted sum of $c_0$ and $C(K)$ is trivial, for certain nonmetrizable compact Hausdorff spaces $K$.
The examples presented in \cite{Plebanek} were the first (consistent) counterexamples to the conjecture of Cabello, Castillo, Kalton, and Yost
\cite{CastilloKalton,JesusSobczyk,Castilloscattered,ExtCKc0} that, for any nonmetrizable compact Hausdorff space $K$, there exists a {\em nontrivial twisted sum of $c_0$ and $C(K)$}, i.e., a Banach space $X$ containing a noncomplemented isomorphic copy
$Y$ of $c_0$ such that $X/Y$ is isomorphic to $C(K)$. As usual, $C(K)$ denotes the Banach space of continuous real-valued functions on $K$, endowed with
the supremum norm.

In \cite[Theorem~3.4, Theorem~5.1]{Plebanek}, the authors show that, under Martin's Axiom, given a separable zero-dimensional compact Hausdorff space $K$ with weight less than the {\it continuum}, if the algebra $\clop(K)$ of clopen subsets of $K$ has $\LEP$, then every twisted sum of $c_0$ and $C(K)$ is trivial. They also show that a free boolean algebra (equivalently, the clopen algebra of a power of $2$) has $\LEP$ (\cite[Proposition~4.7]{Plebanek}) and that if $K$ is the one-point compactification of a $\Psi$-space, then $\clop(K)$ has $\LEP$ (\cite[Proposition~4.9]{Plebanek}).

Recall that a {\em $\Psi$-space\/} is a separable locally compact Hausdorff space of height $2$. The standard construction of a $\Psi$-space is
done using an almost disjoint family $(V_\gamma)_{\gamma\in\Gamma}$ of subsets of $\omega$ which defines a topology in the disjoint union
$\omega\cup\Gamma$. In this topology, $\omega$ is an open dense discrete subspace and the fundamental neighborhoods of each $\gamma\in\Gamma$ are unions
of $\{\gamma\}$ with cofinite subsets of $V_\gamma$. In Section~\ref{sec:main}, we generalize this construction by replacing the countable discrete space
$\omega$ with an arbitrary locally compact Hausdorff space and the almost disjoint family $(V_\gamma)_{\gamma\in\Gamma}$
with an antichain in an appropriate boolean algebra. This generalization allows us to iterate the process in order to produce an arbitrary compact Hausdorff
space of finite height, obtaining then a description of its clopen algebra in terms of the various antichains used in the construction.
Though this iterative process can be used a transfinite number of times to obtain a similar description for the clopen algebra of an arbitrary
compact Hausdorff scattered space, we leave open the problem of weather the clopen algebra has $\LEP$ for spaces of infinite height.

\end{section}

\begin{section}{Generalized $\Psi$-spaces}
\label{sec:main}

Throughout this section, $\mathcal X$ denotes a topological space and $\Gamma$ denotes an arbitrary set with $\mathcal X \cap \Gamma=\emptyset$. We denote by $\wp(\mathcal X)$ the boolean algebra of all subsets of $\mathcal X$, by $\cb(\mathcal X)$ the boolean subalgebra of $\wp(\mathcal X)$ consisting of sets with compact boundary and by $\rc(\mathcal X)$ the ideal of $\cb(\mathcal X)$ consisting of relatively compact sets, i.e., sets with compact closure. Our initial purpose is to characterize the topologies on $\mathcal X \cup \Gamma$ for which $\mathcal X$ is an open dense subspace and $\Gamma$ is a discrete subspace. This characterization is given in terms of families of filters.

\begin{defin}\label{filtertoplogy}
By the topology on $\mathcal X \cup \Gamma$ {\em induced\/} by a given a family $(\mathfrak F_\gamma)_{\gamma \in \Gamma}$ of proper filters of $\wp(\mathcal X)$, we mean the topology in which a subset $U$ of $\mathcal X \cup \Gamma$ is open if and only if $U \cap \mathcal X$ is open in $\mathcal X$ and $U \cap \mathcal X$ is in $\mathfrak F_\gamma$, for all $\gamma \in U \cap \Gamma$.
\end{defin}

In order to obtain an one-to-one correspondence between certain topologies on $\mathcal X\cup\Gamma$ and families of filters, we need to restrict ourselves
to filters satisfying an additional condition.
\begin{defin}
We say that a filter $\mathfrak F$ of $\wp(\mathcal X)$ is {\em closed under interiors\/} if for every $V\in\mathfrak F$, the interior
$\Int(V)$ of $V$ belongs to $\mathfrak F$.
\end{defin}

\begin{prop}\label{TopFiltro}
The topology on $\mathcal X \cup \Gamma$ induced by a family of proper filters $(\mathfrak F_\gamma)_{\gamma \in \Gamma}$ satisfies the following conditions:
\begin{itemize}
\item[(a)] the subspace topology of $\mathcal X$ coincides with the original topology of $\mathcal X$;
\item[(b)] $\mathcal X$ is open and dense in $\mathcal X \cup \Gamma$;
\item[(c)] the subspace topology of $\Gamma$ is discrete.
\end{itemize}
If each filter $\mathfrak F_\gamma$ is closed under interiors, then:
\begin{gather}
\label{Fgamma}\mathfrak F_\gamma=\big\{U \cap \mathcal X: \text{$U$ is a nhood of $\gamma$ in $\mathcal X \cup \Gamma$}\big\} \quad \text{and}\\
\label{Fgamma2}\mathfrak F_\gamma=\big\{V \in \wp(\mathcal X): \text{$V \cup \{\gamma\}$ is a nhood of $\gamma$ in $\mathcal X \cup \Gamma$}\big\}.
\end{gather}
Moreover, given a topology on $\mathcal X \cup \Gamma$ satisfying (a), (b) and (c), the sets defined in \eqref{Fgamma} are proper closed under interiors filters of $\wp(\mathcal X)$ and the given topology on $\mathcal X \cup \Gamma$ coincides with the topology induced by this family of filters.
\end{prop}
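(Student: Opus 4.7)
The plan is to verify the three assertions of the proposition---properties (a)--(c), the recovery formulas \eqref{Fgamma} and \eqref{Fgamma2}, and the converse correspondence---mostly by unwinding the definitions, with the non-routine content concentrated in the role of closure under interiors. First I would establish (a)--(c) for any family of proper filters: property (a) is nearly tautological, since a set $V\subseteq\mathcal X$ open in the original topology satisfies $V\cap\Gamma=\emptyset$ and hence vacuously satisfies the condition in Definition \ref{filtertoplogy}, while the reverse inclusion is built into that definition; openness of $\mathcal X$ in (b) is immediate by taking $U=\mathcal X$, and density is where properness enters, because any induced-open neighborhood of $\gamma\in\Gamma$ has its $\mathcal X$-part in $\mathfrak F_\gamma$ and therefore non-empty; finally, $\mathcal X\cup\{\gamma\}$ is induced-open and isolates $\gamma$ in $\Gamma$, giving (c).

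Next, assuming each $\mathfrak F_\gamma$ is closed under interiors, I would prove \eqref{Fgamma2} first; the equivalence \eqref{Fgamma} then follows formally from the correspondence $V\leftrightarrow V\cup\{\gamma\}$ between subsets of $\mathcal X$ and subsets of $\mathcal X\cup\Gamma$ containing $\gamma$. The non-trivial direction of \eqref{Fgamma2} is ``$V\in\mathfrak F_\gamma$ implies $V\cup\{\gamma\}$ is a neighborhood of $\gamma$'', and this is precisely where closure under interiors is essential: it yields $\Int(V)\in\mathfrak F_\gamma$, so $\Int(V)\cup\{\gamma\}$ is induced-open and witnesses $V\cup\{\gamma\}$ as a neighborhood. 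The converse direction follows formally from upward closure of $\mathfrak F_\gamma$.

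For the converse correspondence, given a topology on $\mathcal X\cup\Gamma$ satisfying (a)--(c), I would define $\mathfrak F_\gamma$ via \eqref{Fgamma} and verify, in order: (i) that $\mathfrak F_\gamma$ is a proper filter---properness from the density in (b); finite intersections from intersecting neighborhoods of $\gamma$; upward closure via the trick $U\mapsto U\cup V'$ of enlarging a neighborhood by the intended superset; (ii) closure under interiors, by shrinking $U$ to an open sub-neighborhood $U'$ and invoking (a) to see that $U'\cap\mathcal X\subseteq\Int(V)$ is already in $\mathfrak F_\gamma$; and (iii) that the given topology coincides with the one induced by these filters. For (iii), one inclusion is immediate from the definitions, while the other uses \eqref{Fgamma2}, now re-established for these filters, together with property (c) to trim an open neighborhood of $\gamma$ down to one that meets $\Gamma$ only at $\gamma$. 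The main point requiring care, and the only step that is not pure unwinding of definitions, is exactly this interplay between closure under interiors in one direction and the discreteness condition (c) in the other; together they ensure that the correspondence between topologies and filter families is a bijection.
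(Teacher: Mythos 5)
Your proposal is correct and follows essentially the same route as the paper's (much terser) proof: the key step for \eqref{Fgamma} and \eqref{Fgamma2} is exactly the paper's observation that $\Int(V)\cup\{\gamma\}$ is open in the induced topology, and your use of discreteness of $\Gamma$ to shrink a $\tau$-neighborhood of $\gamma$ in the final inclusion is precisely what the paper indicates. The additional detail you supply (properness for density, the enlargement trick for upward closure, condition (a) for closure under interiors) correctly fills in the steps the paper labels ``straightforward'' or ``immediate.''
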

\begin{proof}
Assume that $\mathcal X\cup\Gamma$ is endowed with the topology induced by a family $(\mathfrak F_\gamma)_{\gamma\in\Gamma}$ of proper filters.
The fact that this topology satisfies (a), (b) and (c) is straightforward. Assuming that $\mathfrak F_\gamma$ is closed under interiors, the proof of
\eqref{Fgamma} and \eqref{Fgamma2} follows by noting that, for $V\in\mathfrak F_\gamma$, we have that $\Int(V)\cup\{\gamma\}$ is open in $\mathcal X\cup\Gamma$ and thus $V\cup\{\gamma\}$ is a neighborhood of $\gamma$ in $\mathcal X\cup\Gamma$. Now assume that $\mathcal X\cup\Gamma$ is endowed with a topology
$\tau$ satisfying (a), (b) and (c) and let $\mathfrak F_\gamma$ be defined by \eqref{Fgamma}. The fact that $\mathfrak F_\gamma$ is a proper filter closed under interiors and the fact that $\tau$ is contained in the topology induced by $(\mathfrak F_\gamma)_{\gamma\in\Gamma}$ are immediate. Finally, for
$U$ open in the topology induced by $(\mathfrak F_\gamma)_{\gamma\in\Gamma}$, use the fact that $\Gamma$ is discrete in the $\tau$-subspace topology
to check that $U$ is a $\tau$-neighborhood of every $\gamma\in U\cap\Gamma$.
\end{proof}

Now we wish to study locally compact Hausdorff topologies on $\mathcal X \cup \Gamma$ induced by families of filters. We show below that such topologies can be described in terms of antichains in the quotient algebra $\cb(\mathcal X)/\rc(\mathcal X)$. Recall that the topology of a $\Psi$-space is defined through an antichain in $\wp(\omega)/\Fin(\omega)$, where $\Fin(\omega)$ denotes the ideal of $\wp(\omega)$ consisting of finite sets. The algebra $\cb(\mathcal X)/\rc(\mathcal X)$ is the appropriate generalization of $\wp(\omega)/\Fin(\omega)$ when the discrete space $\omega$ is replaced with an arbitrary locally compact Hausdorff space $\mathcal X$. For any boolean algebra $\mathcal B$ and any ideal $\mathcal J$ of $\mathcal B$, we denote by $[b]\in\mathcal B/\mathcal J$ the class of an element $b\in\mathcal B$.

\begin{prop}\label{Anticadeia}
Assume that $\mathcal X$ is locally compact and Hausdorff. Let $\mathcal X \cup \Gamma$ be endowed with the topology induced by a family of proper closed under interiors filters $(\mathfrak F_\gamma)_{\gamma \in \Gamma}$. We have that this topology is locally compact Hausdorff if and only if there exists an (automatically unique) antichain $(v_\gamma)_{\gamma \in \Gamma}$ in the quotient algebra $\cb(\mathcal X)/\rc(\mathcal X)$ such that $v_\gamma$ is a basis for the filter $\mathfrak F_\gamma$, for every $\gamma$.
\end{prop}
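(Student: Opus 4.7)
I would prove the two implications and the uniqueness in turn.

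For the forward direction, using the discreteness of $\Gamma$ I first pick, for each $\gamma$, an open set $W_\gamma$ with $W_\gamma\cap\Gamma=\{\gamma\}$ and then, by local compactness, a compact neighborhood $K_\gamma\subseteq W_\gamma$ of $\gamma$, and set $V_\gamma=K_\gamma\cap\mathcal X$. A short computation showing $\Int V_\gamma=\Int K_\gamma\cap\mathcal X$ gives that $\partial V_\gamma$ is a closed subset of the compact set $K_\gamma$, so $V_\gamma\in\cb(\mathcal X)$. I then verify that $v_\gamma:=[V_\gamma]$ is a basis for $\mathfrak F_\gamma$ in the sense that $\mathfrak F_\gamma=\{U\subseteq\mathcal X:V_\gamma\setminus U\in\rc(\mathcal X)\}$: if $U\cup\{\gamma\}$ is open, then $V_\gamma\setminus U=K_\gamma\setminus(U\cup\{\gamma\})$ is closed in the compact $K_\gamma$; conversely, if $L=\overline{V_\gamma\setminus U}^{\mathcal X}$ is compact, then Hausdorffness makes $L$ closed in $\mathcal X\cup\Gamma$, so $(\Int K_\gamma\setminus L)\cup\{\gamma\}$ is an open neighborhood of $\gamma$ contained in $U\cup\{\gamma\}$. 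Independence of the choice of $K_\gamma$ is checked by taking, given two choices $K,K'$, a compact neighborhood $K''\subseteq K\cap K'$ of $\gamma$: then $(K\cap\mathcal X)\setminus(K''\cap\mathcal X)\subseteq K\setminus\Int K''$ is compact, and symmetrically. The antichain condition at $\gamma\neq\gamma'$ then follows by choosing $K_\gamma,K_{\gamma'}$ disjoint via Hausdorffness, so that $V_\gamma\cap V_{\gamma'}=\emptyset$. Uniqueness is immediate: any other representative must itself lie in $\mathfrak F_\gamma$ and hence be $\rc$-equivalent to $V_\gamma$.

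For the reverse direction, I fix representatives $V_\gamma\in\cb(\mathcal X)$ and handle Hausdorffness case by case. Two points in $\mathcal X$ are separated inside $\mathcal X$; a pair $x\in\mathcal X,\gamma\in\Gamma$ by a compact $\mathcal X$-neighborhood $K\ni x$ together with the open set $\{\gamma\}\cup(\Int V_\gamma\setminus K)$ (open because $V_\gamma\setminus(\Int V_\gamma\setminus K)\subseteq\partial V_\gamma\cup(V_\gamma\cap K)$ is compact); and a pair $\gamma\neq\gamma'$ by setting $C=\overline{V_\gamma\cap V_{\gamma'}}^{\mathcal X}$ (compact, by the antichain condition) and using $\{\gamma\}\cup(\Int V_\gamma\setminus C)$ and $\{\gamma'\}\cup(\Int V_{\gamma'}\setminus C)$. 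Local compactness is clear at points of $\mathcal X$; at $\gamma\in\Gamma$ I take $N=V_\gamma\cup\{\gamma\}$, a neighborhood of $\gamma$ because $\Int V_\gamma\cup\{\gamma\}$ is open. The antichain property again rules out any other $\Gamma$-point from $\overline N$, so $\overline N=\overline{V_\gamma}^{\mathcal X}\cup\{\gamma\}$. Given an open cover, I fix $U_{i_0}\ni\gamma$; the basis property forces $V_\gamma\setminus U_{i_0}\in\rc(\mathcal X)$, and the decomposition $\overline{V_\gamma}=V_\gamma\cup\partial V_\gamma$ yields
\[
\overline N\setminus U_{i_0}\subseteq\overline{V_\gamma\setminus U_{i_0}}^{\mathcal X}\cup(\partial V_\gamma\setminus U_{i_0}),
\]
a finite union of compact subsets of $\mathcal X$, hence compact. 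A finite subcover of this compact set together with $U_{i_0}$ finishes the argument.

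The main obstacle is precisely the compactness step above. Because the filter $\mathfrak F_\gamma$ is proper, one must have $v_\gamma\neq 0$, so $V_\gamma$ is never relatively compact in $\mathcal X$ and the compactness of $\overline N$ does not come for free. What rescues the argument is that every open neighborhood of $\gamma$ is forced to absorb all of $V_\gamma$ except a relatively compact piece; the compactness of $\partial V_\gamma$---the very hypothesis $V_\gamma\in\cb(\mathcal X)$---then patches up the contribution from the boundary.
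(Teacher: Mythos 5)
Your proposal is correct and follows essentially the same route as the paper's proof: in the forward direction you extract the compact neighborhoods $V_\gamma\cup\{\gamma\}$ and set $v_\gamma=[V_\gamma]$, and in the converse you use $\overline{V_\gamma}^{\mathcal X}\cup\{\gamma\}$ as a compact neighborhood of $\gamma$ and sets of the form $(V_\gamma\setminus C)\cup\{\gamma\}$ for Hausdorff separation. You simply supply the routine verifications (the open-cover argument for compactness of $\overline{V_\gamma}\cup\{\gamma\}$, the $\gamma\neq\gamma'$ separation, well-definedness and uniqueness) that the paper leaves to the reader.
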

\begin{proof}
If $\mathcal X\cup\Gamma$ is locally compact and Hausdorff, pick a compact neighborhood of each $\gamma\in\Gamma$ of the form $V_\gamma\cup\{\gamma\}$,
with $V_\gamma$ a subset of $\mathcal X$. It is easy to see that $V_\gamma\in\cb(\mathcal X)$ and that $v_\gamma=[V_\gamma]$ is a basis for the filter
$\mathfrak F_\gamma$. The fact that $\mathcal X$ is Hausdorff implies that $(v_\gamma)_{\gamma\in\Gamma}$ is an antichain. Conversely,
let $(v_\gamma)_{\gamma\in\Gamma}$ be an antichain in $\cb(\mathcal X)/\rc(\mathcal X)$ and assume that $v_\gamma$ is a basis for $\mathfrak F_\gamma$.
Note that, if $V\in v_\gamma$, then the union of $\{\gamma\}$ with the closure of $V$ in $\mathcal X$ is a compact neighborhood
of $\gamma$ in $\mathcal X\cup\Gamma$. Finally, to see that $\mathcal X\cup\Gamma$ is Hausdorff, observe that if $C$ is a compact neighborhood of some
$x$ in $\mathcal X$ and if $V\in v_\gamma$, then $(V\setminus C)\cup\{\gamma\}$ is a neighborhood of $\gamma$ in $\mathcal X\cup\Gamma$ disjoint from $C$.
\end{proof}

We now focus on the case when $\mathcal X$ is zero-dimensional. In this case, it is possible to replace $\cb(\mathcal X)$ with a simpler boolean algebra.
More precisely, we have the following result.

\begin{prop}\label{topZerodim}
In the statement of Proposition~\ref{Anticadeia}, if $\mathcal X$ is in addition assumed to be zero-dimensional, then
we can replace $\cb(\mathcal X)/\rc(\mathcal X)$ with $\clop(\mathcal X)/\clopc(\mathcal X)$,
where $\clopc(\mathcal X)$ denotes the ideal of $\clop(\mathcal X)$ given by:
\[\clopc(\mathcal X)=\big\{C \in \clop(\mathcal X): \text{$C$ is compact}\big\}.\]
Moreover, if the topology on $\mathcal X \cup \Gamma$ is locally compact Hausdorff, then it is automatically zero-dimensional.
\end{prop}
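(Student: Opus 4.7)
The plan is to establish a canonical boolean algebra isomorphism $\clop(\mathcal X)/\clopc(\mathcal X)\cong\cb(\mathcal X)/\rc(\mathcal X)$; once this is in hand, the first assertion follows from Proposition~\ref{Anticadeia} by transporting the antichain across the isomorphism. The zero-dimensionality claim would then be proved separately by constructing a compact clopen neighborhood basis at each point of $\mathcal X\cup\Gamma$.

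Every clopen subset of $\mathcal X$ has empty (hence compact) boundary, and a clopen set is relatively compact iff compact, so $\clop(\mathcal X)\subseteq\cb(\mathcal X)$ and $\clopc(\mathcal X)=\clop(\mathcal X)\cap\rc(\mathcal X)$; this makes the inclusion descend to a well-defined injective homomorphism between the quotients. The substantive step is surjectivity: given $V\in\cb(\mathcal X)$, I want a clopen $C\subseteq\mathcal X$ with $V\triangle C\in\rc(\mathcal X)$. The idea is to trim away the boundary of $V$: since $\partial V$ is compact and $\mathcal X$ admits a basis of compact clopen sets, cover $\partial V$ by finitely many such sets and let $K$ be their union, a compact clopen set with $\partial V\subseteq K$. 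Setting $C=V\setminus K$, a direct boundary computation shows $C$ is clopen in $\mathcal X$ (any hypothetical point of $\partial C$ would, using $\overline{V\setminus K}\subseteq\mathcal X\setminus K$, lie outside $K$ and hence in $\partial V$, contradicting $\partial V\subseteq K$), and $V\triangle C\subseteq K\in\clopc(\mathcal X)$.

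For the ``moreover'' assertion, I would use the isomorphism to pick a clopen representative $C_\gamma\in\clop(\mathcal X)$ of each $v_\gamma$ and show that $C_\gamma\cup\{\gamma\}$ is a compact clopen neighborhood of $\gamma$ in $\mathcal X\cup\Gamma$. Openness is immediate since $\mathcal X$ is open in $\mathcal X\cup\Gamma$; compactness follows because $C_\gamma$ differs from the compact-neighborhood representative produced in the proof of Proposition~\ref{Anticadeia} by a relatively compact set, so $C_\gamma\cup\{\gamma\}$ is a closed subset of a compact set; closedness at a point $\gamma'\ne\gamma$ uses the antichain condition, since the compact set $C_\gamma\cap C_{\gamma'}$ can be separated from $\gamma'$ in the Hausdorff space $\mathcal X\cup\Gamma$, producing a neighborhood of $\gamma'$ inside $C_{\gamma'}\cup\{\gamma'\}$ that is disjoint from $C_\gamma\cup\{\gamma\}$. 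To shrink inside an arbitrary open neighborhood $N\ni\gamma$, observe that $(C_\gamma\cup\{\gamma\})\setminus N$ is compact and lies in $\mathcal X$, cover it by a compact clopen $K'\subseteq\mathcal X$, and replace $C_\gamma$ by $C_\gamma\setminus K'$, which is still a clopen representative of $v_\gamma$ and now sits inside $N$.

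The main obstacle is the surjectivity in the quotient isomorphism; the subsequent bookkeeping with filters, antichains and neighborhoods is routine, but the reduction of an arbitrary element of $\cb(\mathcal X)$ to a clopen one modulo $\rc(\mathcal X)$ is where both local compactness and zero-dimensionality of $\mathcal X$ are essentially used, via the finite compact clopen covering of $\partial V$.
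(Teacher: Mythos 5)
Your proposal is correct and follows essentially the same route as the paper: the key step in both is to trim $\partial V$ by a compact clopen set $K\supseteq\partial V$ (available by local compactness and zero-dimensionality) so that $V\setminus K$ is clopen and $[V]=[V\setminus K]$, yielding the isomorphism $\clop(\mathcal X)/\clopc(\mathcal X)\cong\cb(\mathcal X)/\rc(\mathcal X)$ under which corresponding classes generate the same filter. You additionally write out the boundary computation and the ``moreover'' claim (compact clopen neighborhoods at points of $\Gamma$), which the paper leaves implicit; those details are sound.
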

\begin{proof}
If $\mathcal X$ is zero-dimensional and locally compact, then for every $V$ in $\cb(\mathcal X)$, there exists $C\in\clopc(\mathcal X)$ containing the
boundary of $V$ in $\mathcal X$. Thus, $V\setminus C\in\clop(\mathcal X)$ and $[V]=[V\setminus C]$. It follows that
the inclusion homomorphism of $\clop(\mathcal X)$ into $\cb(\mathcal X)$ passes to the quotient and yields an isomorphism:
\[i:\clop(\mathcal X)/\clopc(\mathcal X) \longrightarrow \cb(\mathcal X)/\rc(\mathcal X).\]
Moreover, for every $v \in \clop(\mathcal X)/\clopc(\mathcal X)$, we have that both $v$ and $i(v)$ are bases of the same filter of $\wp(\mathcal X)$.
\end{proof}

We are now interested in iterating the process described so far of defining topologies in terms of antichains by adding a new layer of points to $\mathcal X \cup \Gamma$. This involves the consideration of antichains in $\clop(\mathcal X \cup \Gamma)/\clopc(\mathcal X \cup \Gamma)$.
It is useful to have a strategy that allows us to work within a fixed boolean algebra as we continue to add layers of points. To this aim, we give the following definition.

\begin{defin}
Given a dense subset $D$ of $\mathcal X$, we set:
\begin{align*}
&\clop^D(\mathcal X)=\big\{C \cap D: C \in \clop(\mathcal X)\big\} \quad \text{and} \\
&\Clopc D(\mathcal X)=\big\{C \cap D: C \in \clopc(\mathcal X)\big\}.
\end{align*}
\end{defin}

Note that $\clop^D(\mathcal X)$ is a subalgebra of $\wp(D)$ and that:
\begin{equation}\label{eq:embD}
\clop(\mathcal X)\ni C\longmapsto C\cap D\in \clop^D(\mathcal X)
\end{equation}
is an isomorphism that carries the ideal $\clopc(\mathcal X)$ onto $\Clopc D(\mathcal X)$. Thus \eqref{eq:embD} induces an isomorphism:
\begin{equation}\label{isoD}
\clop(\mathcal X)/\clopc(\mathcal X) \longrightarrow \clop^D(\mathcal X)/\Clopc D(\mathcal X).
\end{equation}

\begin{defin}
Assume that $\mathcal X$ is locally compact Hausdorff and zero-dimensional and let $D$ be a dense subset of $\mathcal X$. Given an antichain $(v_\gamma)_{\gamma \in \Gamma}$ in $\clop^D(\mathcal X)/\Clopc D(\mathcal X)$, by the topology {\em induced\/} on $\mathcal X \cup \Gamma$ by this antichain we mean the topology induced by the family of filters $(\mathfrak F_\gamma)_{\gamma \in \Gamma}$, where $\mathfrak F_\gamma$ is the filter whose basis is the image of $v_\gamma$ under the inverse of isomorphism \eqref{isoD}.
\end{defin}

In order to describe $\clop^D(\mathcal X \cup \Gamma)$ and $\Clopc D(\mathcal X \cup \Gamma)$ in terms of the antichain $(v_\gamma)_{\gamma \in \Gamma}$, it is useful to introduce some terminology in the framework of abstract boolean algebras.

\begin{defin}
Let $\mathcal B$ be a boolean algebra and $(b_\gamma)_{\gamma \in \Gamma}$ be an antichain in $\mathcal B$. An element $b$ of $\mathcal B$ is called a {\em separator\/} of $(b_\gamma)_{\gamma \in \Gamma}$ if for each $\gamma \in \Gamma$ we have either $b_\gamma \le b$ or $b_\gamma \wedge b=0$. By a {\em trivial separator\/} of $(b_\gamma)_{\gamma \in \Gamma}$ we mean an element of $\mathcal B$ which is the supremum of a finite subset of $\big\{b_\gamma:\gamma \in \Gamma\big\}$.
\end{defin}

The separators of an antichain in $\mathcal B$ form a subalgebra of $\mathcal B$ and the trivial separators form an ideal of this subalgebra.
For simplicity, given an ideal $\mathcal J$ of $\mathcal B$, we say that a family $(b_\gamma)_{\gamma \in \Gamma}$ of elements of $\mathcal B$ is an {\em antichain modulo\/} $\mathcal J$ if $([b_\gamma])_{\gamma \in \Gamma}$ is an antichain in the quotient algebra $\mathcal B/\mathcal J$. Similarly, we say that an element $b$ of $\mathcal B$ is a {\em separator modulo $\mathcal J$\/} (resp., {\em trivial separator modulo $\mathcal J$}) of $(b_\gamma)_{\gamma \in \Gamma}$ if $[b]$ is a separator (resp., trivial separator) of $([b_\gamma])_{\gamma \in \Gamma}$.

\begin{prop}\label{clopd}
Assume that $\mathcal X$ is locally compact Hausdorff and zero-dimensional. Let $D$ be a dense subset
of $\mathcal X$ and consider $\mathcal X \cup \Gamma$ endowed with the topology induced by an antichain $(v_\gamma)_{\gamma\in\Gamma}$ in $\clop^D(\mathcal X)/\Clopc D(\mathcal X)$. Then $\clop^D(\mathcal X\cup\Gamma)$ is the subalgebra of $\clop^D(\mathcal X)$ given by:
\[\clop^D(\mathcal X\cup\Gamma)=\big\{C\in\clop^D(\mathcal X):\text{$[C]$ is a separator of $(v_\gamma)_{\gamma\in\Gamma}$}\big\}\]
and $\Clopc D(\mathcal X\cup\Gamma)$ is the ideal of $\clop^D(\mathcal X\cup\Gamma)$ given by:
\[\Clopc D(\mathcal X\cup\Gamma)=\big\{C\in\clop^D(\mathcal X):\text{$[C]$ is a trivial separator of $(v_\gamma)_{\gamma\in\Gamma}$}\big\}.\]
\end{prop}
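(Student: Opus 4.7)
The plan is to establish both equalities by the same template: a clopen (resp.\ compact clopen) subset $U$ of $\mathcal X\cup\Gamma$ yields $U\cap D\in\clop^D(\mathcal X)$ whose class is a separator (resp.\ trivial separator) of $(v_\gamma)_{\gamma\in\Gamma}$, and conversely every such $C\in\clop^D(\mathcal X)$ is the trace on $D$ of some clopen (resp.\ compact clopen) $U$. Throughout, I would reformulate the filter condition ``$V\in\mathfrak F_\gamma$'' for $V\in\clop(\mathcal X)$ as the inequality $v_\gamma\le[V\cap D]$, using the isomorphism \eqref{isoD} together with the fact that, given any $V'\in\clop(\mathcal X)$ with $[V'\cap D]=v_\gamma$ and $V\supseteq V'$, a standard modification $V''=V\cap V'\in\clop(\mathcal X)$ still satisfies $[V''\cap D]=v_\gamma$.

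For the first equality: if $U$ is clopen in $\mathcal X\cup\Gamma$, then $U\cap\mathcal X$ and its complement in $\mathcal X$ are clopen by (a), and applying the reformulation to each $\gamma\in U\cap\Gamma$ (resp.\ $\gamma\in\Gamma\setminus U$) gives $v_\gamma\le[U\cap D]$ (resp.\ $v_\gamma\wedge[U\cap D]=0$); hence $[U\cap D]$ is a separator. Conversely, write $C=C'\cap D$ with $C'\in\clop(\mathcal X)$ and assume $[C]$ is a separator; put $U=C'\cup\{\gamma\in\Gamma:v_\gamma\le[C]\}$. Then $U\cap\mathcal X=C'$ is open in $\mathcal X$ and $C'\in\mathfrak F_\gamma$ for every $\gamma\in U\cap\Gamma$, making $U$ open; the analogous argument for $(\mathcal X\cup\Gamma)\setminus U$, now using the separator property to turn $v_\gamma\not\le[C]$ into $v_\gamma\wedge[C]=0$, shows that $U$ is closed, and evidently $U\cap D=C$.

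For the second equality the forward direction is the main task. Since $\mathcal X$ is open in $\mathcal X\cup\Gamma$, the set $\Gamma$ is closed; combined with (c), $U\cap\Gamma$ is a compact discrete subspace, hence finite, say $\{\gamma_1,\dots,\gamma_n\}$. I would then aim to show $[U\cap D]=v_{\gamma_1}\vee\dots\vee v_{\gamma_n}$: choose $V_i\in\clop(\mathcal X)$ with $V_i\subseteq U\cap\mathcal X$ and $[V_i\cap D]=v_{\gamma_i}$, and set $W'=(U\cap\mathcal X)\setminus(V_1\cup\dots\cup V_n)\in\clop(\mathcal X)$. The main obstacle is to prove $W'\in\clopc(\mathcal X)$, which is where the antichain hypothesis becomes essential: using $v_\gamma\wedge v_{\gamma_i}=0$ for $\gamma\ne\gamma_i$, the separator property of $[U\cap D]$, and $v_{\gamma_i}\le v_{\gamma_1}\vee\dots\vee v_{\gamma_n}$, one checks $v_\gamma\wedge[W'\cap D]=0$ for every $\gamma\in\Gamma$. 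Since properness of $\mathfrak F_\gamma$ forces $v_\gamma\ne0$, no point of $\Gamma$ lies in the closure of $W'$ in $\mathcal X\cup\Gamma$, so $W'$ is closed in $\mathcal X\cup\Gamma$ and contained in the compact set $U$, whence compact. The converse is then easy: given $[C]=v_{\gamma_1}\vee\dots\vee v_{\gamma_n}$, take the clopen $U$ from the previous paragraph and choose compact neighborhoods $V_i\cup\{\gamma_i\}$ of each $\gamma_i$ via Proposition~\ref{Anticadeia}; the symmetric difference of $U\cap\mathcal X=C'$ and $V_1\cup\dots\cup V_n$ has class $0$ and thus corresponds via \eqref{eq:embD} to a compact clopen subset of $\mathcal X$, so $U$ is contained in the union of the compact sets $V_i\cup\{\gamma_i\}$ and a compact clopen set, hence is itself compact.
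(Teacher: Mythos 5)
Your proof is correct and follows essentially the same route as the paper, whose proof merely asserts the two key characterizations (a subset $U$ of $\mathcal X\cup\Gamma$ is clopen iff $U\cap\mathcal X$ is clopen and $[U\cap\mathcal X]$ dominates $\tilde v_\gamma$ for $\gamma\in U$ and is disjoint from $\tilde v_\gamma$ for $\gamma\notin U$; and the compact clopens are, modulo a compact clopen piece of $\mathcal X$, the sets $F\cup\bigcup_{\gamma\in F}V_\gamma$ with $F$ finite) without writing out the verifications. You supply exactly those verifications, including the only genuinely nonroutine point — that the remainder $W'=(U\cap\mathcal X)\setminus(V_1\cup\dots\cup V_n)$ is compact because no $\gamma\in\Gamma$ is in its closure — so nothing is missing.
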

\begin{proof}
Let $(\tilde{v}_\gamma)_{\gamma \in \Gamma}$ be the antichain in $\clop(\mathcal X)/\clopc(\mathcal X)$ corresponding to $(v_\gamma)_{\gamma \in \Gamma}$ through \eqref{isoD}. Note that a subset $C$ of $\mathcal X\cup\Gamma$ is clopen if and only if $C \cap \mathcal X \in \clop(\mathcal X)$,
$[C \cap \mathcal X] \ge \tilde{v}_\gamma$, for all $\gamma \in C \cap \Gamma$, and $[C \cap \mathcal X] \wedge \tilde{v}_\gamma=0$, for all $\gamma \in \Gamma \setminus C$.
Moreover, $C$ is a compact clopen of $\mathcal X\cup\Gamma$ if and only if either $C\in\clopc(\mathcal X)$ or $C$ is of the form
$F \cup \bigcup_{\gamma \in F} V_\gamma$, with $F\subset\Gamma$ finite and $V_\gamma\in \tilde{v}_\gamma$, for $\gamma\in F$.
\end{proof}

\subsection{The clopen algebra of a finite height space}\label{sub:finiteheight}
Let $K$ be an infinite compact Hausdorff space of finite height and denote by $K^{(n)}$ its $n$-th Cantor--Bendixson derivative. Let $N\ge0$ be such that
$K^{(N+1)}$ is finite and nonempty and set:
\[\Gamma_n=K^{(n)}\setminus K^{(n+1)},\quad n=0,1,\ldots,N+1,\]
so that $\big\{\Gamma_n:0\le n\le N+1\big\}$ is a partition of $K$. For each $n$, we have that the union:
\[\mathcal X_n=\Gamma_0\cup\Gamma_1\cup\ldots\cup\Gamma_n=K\setminus K^{(n+1)}\]
is open in $K$ and thus locally compact Hausdorff zero-dimensional. Moreover, the subspace topology of each $\Gamma_n$ is discrete and
$\mathcal X_0=\Gamma_0$ is dense in $\mathcal X_n$. It follows from Proposition~\ref{topZerodim} that the topology of $\mathcal X_{n+1}=\mathcal X_{n} \cup \Gamma_{n+1}$ is induced by
an antichain $(v^{n+1}_\gamma)_{\gamma\in\Gamma_{n+1}}$ in $\mathcal A_n/\mathcal J_n$, where:
\[\mathcal A_n= \clop^{\Gamma_0}(\mathcal X_n) \quad \text{and} \quad \mathcal J_n=\Clopc{\Gamma_0}(\mathcal X_n).\]
For each $n \ge 1$ and $\gamma \in \Gamma_n$, we pick $V^n_\gamma \in v^n_\gamma$, so that $(V^n_\gamma)_{\gamma \in \Gamma_n}$ is an antichain in $\mathcal A_{n-1}$ modulo $\mathcal J_{n-1}$. It follows from Proposition~\ref{clopd} that:
\begin{align*}
&\mathcal A_n=\big\{C \in \mathcal A_{n-1}: \text{$C$ is a separator of $(V^n_\gamma)_{\gamma \in \Gamma_n}$ modulo $\mathcal J_{n-1}$}\big\}\quad\text{and}\\
&\mathcal J_n=\big\{C \in \mathcal A_{n-1}: \text{$C$ is a trivial separator of $(V^n_\gamma)_{\gamma \in \Gamma_n}$ modulo $\mathcal J_{n-1}$}\big\}.
\end{align*}
We also set:
\[\mathcal A_{-1}=\wp(\Gamma_0),\qquad \mathcal J_{-1}=\{\emptyset\}\qquad\text{and}\qquad V^0_\gamma=\{\gamma\}, \quad \text{for $\gamma \in \Gamma_0$}.\]
Note that:
\[\mathcal J_{-1} \subset\mathcal J_0\subset\cdots\subset\mathcal J_{N+1}=\clop^{\Gamma_0}(K)=\mathcal A_{N+1}\subset\cdots\subset\mathcal A_0=\mathcal A_{-1}.\]
In the proof of our main result in Section~\ref{sec:mainresults}, it will be useful to assume that:
\begin{equation}\label{eq:partitionGamma0}
\Gamma_0=\!\!\!\bigcup_{\gamma\in\Gamma_{N+1}}\!\!\!V^{N+1}_\gamma,
\end{equation}
which is possible since $\Gamma_0\in\mathcal J_{N+1}$ and thus $[\Gamma_0]=\bigvee_{\gamma \in \Gamma_{N+1}}v^{N+1}_\gamma$.

\medskip

We conclude this section with a concrete description of the boolean algebra $\clop^{\Gamma_0}(K)$, which is isomorphic to $\clop(K)$.

\begin{prop}
In the context described above, we have that $\mathcal J_n$ is the ideal of $\clop^{\Gamma_0}(K)$ generated by:
\begin{equation}\label{eq:geraIn}
\big\{V^i_\gamma:\gamma\in\Gamma_i,\ i=0,1,\ldots,n\big\},
\end{equation}
for all $n$. Moreover, $\clop^{\Gamma_0}(K)$ is the subalgebra of $\wp(\Gamma_0)$ generated
by:
\begin{equation}\label{geraB}
\big\{V^n_\gamma:\gamma\in\Gamma_n,\ n=0,1,\ldots,N+1\big\}.
\end{equation}
\end{prop}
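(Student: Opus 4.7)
The plan is to establish the two assertions in sequence, both by induction on $n$, with Proposition~\ref{clopd} as the central tool. Throughout, write $G_n$ for the set $\{V^i_\gamma : 0 \le i \le n,\ \gamma \in \Gamma_i\}$.

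For the ideal description, I first observe that $\mathcal{J}_n$ really is an ideal of $\mathcal{A}_{N+1} = \clop^{\Gamma_0}(K)$, since it is an ideal of $\mathcal{A}_n$ and $\mathcal{A}_{N+1} \subseteq \mathcal{A}_n$. Because $V^i_\gamma \in \mathcal{J}_i \subseteq \mathcal{J}_n$ for $i \le n$, the ideal generated by $G_n$ is contained in $\mathcal{J}_n$. For the reverse inclusion, assume the statement at stage $n-1$ and pick $C \in \mathcal{J}_n$; Proposition~\ref{clopd} gives a finite $F \subseteq \Gamma_n$ with $C \triangle \bigcup_{\gamma \in F} V^n_\gamma \in \mathcal{J}_{n-1}$. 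The induction hypothesis places this symmetric difference below a finite join of elements of $G_{n-1}$, hence places $C$ below a finite join of elements of $G_n$. The base $n = -1$ is trivial.

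For the subalgebra description, let $\mathcal{B}_n$ denote the subalgebra of $\wp(\Gamma_0)$ generated by $G_n$. The already-proved first assertion applied at $n = N+1$ yields $\clop^{\Gamma_0}(K) = \mathcal{J}_{N+1}$, but this only tells me that every $C \in \clop^{\Gamma_0}(K)$ is bounded above by a finite join of generators, which is weaker than $C \in \mathcal{B}_{N+1}$. I therefore prove by induction on $n$ the following strengthening: \emph{every $C \in \mathcal{A}_{N+1}$ that is bounded above by a finite join of elements of $G_n$ belongs to $\mathcal{B}_n$}. Split the bounding generators into those of level below $n$, with union $V_1$, and those at level $n$, with union $V_2 = \bigcup_j V^n_{\gamma_j}$, so that $C = (C \cap V_1) \cup \bigcup_j (C \cap V^n_{\gamma_j})$, a finite union. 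The piece $C \cap V_1 \le V_1$ is handled by the induction hypothesis. For each $C \cap V^n_{\gamma_j}$, the separator characterization from Proposition~\ref{clopd}---applicable because $C \in \mathcal{A}_{N+1} \subseteq \mathcal{A}_n$---forces either $C \cap V^n_{\gamma_j} \in \mathcal{J}_{n-1}$ or $V^n_{\gamma_j} \setminus C \in \mathcal{J}_{n-1}$. The first assertion then places the error set below a finite join of elements of $G_{n-1}$; the induction hypothesis puts it in $\mathcal{B}_{n-1}$; and so $C \cap V^n_{\gamma_j}$, being a Boolean combination of $V^n_{\gamma_j}$ with an element of $\mathcal{B}_{n-1}$, lies in $\mathcal{B}_n$.

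The principal obstacle is the subalgebra assertion: the ideal claim provides only an upper bound $C \le \bigvee V^{i_j}_{\gamma_j}$, which by itself does not produce a Boolean expression for $C$ in terms of the generators. The separator condition at each level is what closes the gap, forcing $C \cap V^n_\gamma$ to equal either $V^n_\gamma$ or $\emptyset$ modulo a set in $\mathcal{J}_{n-1}$ that has already been expressed as a Boolean combination of lower-level generators by the inductive step.
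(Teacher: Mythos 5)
Your proof is correct and follows essentially the same route as the paper's, which merely sketches the same two inductions on $n$ based on the characterizations of $\mathcal A_n$ and $\mathcal J_n$ from Proposition~\ref{clopd}. The only cosmetic difference is in the second induction: the trivial-separator condition already gives $C=\bigl(\bigcup_{\gamma\in F}V^n_\gamma\bigr)\mathbin{\triangle}D$ with $D\in\mathcal J_{n-1}\subset\mathcal B_{n-1}$, so your generator-by-generator case split via the separator dichotomy, while valid, can be shortcut.
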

\begin{proof}
The first statement is easily proven by induction on $n$ and, since $\mathcal J_{N+1}=\clop^{\Gamma_0}(K)$, the second follows from the fact that
$\mathcal J_n$ is contained in the subalgebra of $\wp(\Gamma_0)$ generated by \eqref{eq:geraIn}, which is also proven by induction on $n$.
\end{proof}

\end{section}

\begin{section}{Extension of compatible measures}
\label{sec:pointdiagram}

We start by introducing some notation and recalling some elementary facts. Given a boolean algebra $\mathcal B$, we denote by $\atom(\mathcal B)$ the set of all atoms of $\mathcal B$ and by $M(\mathcal B)$ the Banach space of bounded finitely additive real-valued signed measures on $\mathcal B$, endowed with the total variation norm.
For any subset $S$ of $\mathcal B$, we denote by $\langle S\rangle$ the subalgebra generated by $S$. If $\mathcal B$ is finite, then the atoms of $\mathcal B$ form a finite partition of unity and there is a natural isomorphism between $\mathcal B$ and $\wp\big(\!\atom(\mathcal B)\big)$, which associates to each $b\in\mathcal B$ the set of atoms of $\mathcal B$ below $b$. Moreover, the map that carries each $\mu\in M(\mathcal B)$ to its restriction to $\atom(\mathcal B)$ is a linear isometry from $M(\mathcal B)$ onto the space $\ell_1\!\big(\!\atom(\mathcal B)\!\big)$
of maps $f:\atom(\mathcal B)\to\R$, endowed with the norm $\Vert f\Vert=\sum_{b\in\atom(\mathcal B)}\vert f(b)\vert$.

The set of all finite subalgebras of a boolean algebra, ordered by inclusion, is a lattice.
If $X$ is an arbitrary set, then the lattice of finite subalgebras of $\wp(X)$ is anti-isomorphic to
the lattice of all equivalence relations $\sim$ on $X$ for which the quotient $X/{\sim}$ is finite.
This anti-isomorphism associates to each finite boolean subalgebra of $\wp(X)$ the equivalence relation whose equivalence classes are the atoms of the subalgebra. It follows that,
given finite subalgebras $\mathcal B_1$ and $\mathcal B_2$ of $\wp(X)$ corresponding to equivalence relations $\sim_1$ and $\sim_2$, then
$\langle\mathcal B_1\cup\mathcal B_2\rangle$ corresponds to the intersection of $\sim_1$ with $\sim_2$ and $\mathcal B_1\cap\mathcal B_2$
corresponds to the transitive closure of the union of $\sim_1$ with $\sim_2$.

\begin{defin}\label{definconstante}
Let $\mathcal B_1$ and $\mathcal B_2$ be finite subalgebras of a boolean algebra $\mathcal B$ with $\mathcal B=\langle \mathcal B_1 \cup \mathcal B_2 \rangle$. We say that two measures $\mu_1 \in M(\mathcal B_1)$ and $\mu_2 \in M(\mathcal B_2)$ are {\em compatible\/} if they agree on $\mathcal B_1 \cap \mathcal B_2$. The {\em extension constant\/} $c(\mathcal B_1, \mathcal B_2)$ is defined as the infimum of the set of all $c \ge 0$ such that
for every pair of compatible measures $\mu_1 \in M(\mathcal B_1)$ and $\mu_2 \in M(\mathcal B_2)$ there exists a common extension $\mu \in M(\mathcal B)$ with:
\begin{equation}\label{constante}
\Vert \mu \Vert \le c(\Vert \mu_1\Vert+\Vert \mu_2 \Vert).
\end{equation}
\end{defin}

It is shown in \cite[Lemma~0.1]{Rao} that two compatible measures always admit a common extension. Moreover, it follows from \cite[Theorem~1.5]{Rao} that the extension constant of any pair of finite subalgebras is finite. The challenge is to find upper bounds for extension constants when we are dealing with an infinite family of finite boolean algebras.

\begin{defin}\label{definLep}
Given $r>0$, we say that a boolean algebra $\mathcal B$ has the {\em local extension property with constant $r$}, abbreviated as $\LEP(r)$, if there exists a collection $\Omega$ of finite subalgebras of $\mathcal B$ satisfying the following conditions:
\begin{itemize}
\item $\Omega$ is cofinal in the lattice of finite subalgebras of $\mathcal B$;
\item in every uncountable subset of $\Omega$ there exists a pair of distinct subalgebras $\mathcal B_1$ and $\mathcal B_2$ with $c(\mathcal B_1, \mathcal B_2) \le r$.
\end{itemize}
We say that $\mathcal B$ has $\LEP$ if it has $\LEP(r)$, for some $r>0$.
\end{defin}

\begin{rem}\label{remLEP}
The definition of $\LEP(r)$ appears in \cite{Plebanek} in a slightly different form. Firstly, in \cite{Plebanek} the authors consider only rational valued measures. This difference is not relevant since, arguing as in the proof of \cite[Lemma~A.1~(c)]{Plebanek}, one can show that if $\mu_1\in M(\mathcal B_1)$
and $\mu_2\in M(\mathcal B_2)$ are rational valued and if $c$ is rational, then the existence of a common extension $\mu\in M(\mathcal B)$ satisfying
\eqref{constante} is equivalent to the existence of a rational valued common extension $\mu$ satisfying \eqref{constante}.
Moreover, for rational $r$, the definition of $\LEP(r)$ given in \cite{Plebanek} becomes equivalent to ours by replacing $\Vert \mu_1 \Vert+\Vert \mu_2 \Vert$ with $\max \{\Vert \mu_1 \Vert, \Vert \mu_2 \Vert\}$ in \eqref{constante}. Therefore, if $\mathcal B$ has $\LEP(r)$ according to Definition~\ref{definLep}
and $r$ is rational, then it has $\LEP(2r)$ in the sense of \cite{Plebanek}.
\end{rem}

In what follows we introduce a visual tool that captures all the relevant information about how two finite boolean algebras are embedded in a larger one. This tool is useful for estimating extension constants.

\begin{defin}
A {\em point diagram\/} is a triple $(X;F_1,F_2)$ where $F_1$ and $F_2$ are finite sets and $X$ is a subset of $F_1\times F_2$ with
$\pi_1[X]=F_1$ and $\pi_2[X]=F_2$, where $\pi_1$ and $\pi_2$ denote the projections.
\end{defin}
A point diagram $(X;F_1,F_2)$ defines embeddings of $\wp(F_1)$ and $\wp(F_2)$ into $\wp(X)$. More precisely, we have injective
homomorphisms of boolean algebras:
\[\iota_1:\wp(F_1)\longrightarrow\wp(X) \quad \text{and}\quad\iota_2:\wp(F_2)\longrightarrow\wp(X)\]
given by:
\[\iota_1(A)=\pi_1^{-1}[A]\cap X,\ A\in\wp(F_1)\quad\text{and}\quad\iota_2(A)=\pi_2^{-1}[A]\cap X,\ A\in\wp(F_2).\]
The subalgebras $\iota_1[\wp(F_1)]$ and $\iota_2[\wp(F_2)]$ correspond, respectively, to the equivalence relations $\sim_1$ and $\sim_2$ on $X$ defined by:
\begin{equation}\label{eq:defsim12}
x\sim_1y\Longleftrightarrow\pi_1(x)=\pi_1(y)\quad\text{and}\quad x\sim_2y\Longleftrightarrow\pi_2(x)=\pi_2(y),
\end{equation}
for all $x,y\in X$. It follows that the copies of $\wp(F_1)$ and $\wp(F_2)$ in $\wp(X)$ generate $\wp(X)$, i.e.:
\[\wp(X)=\big\langle\iota_1[\wp(F_1)]\cup\iota_2[\wp(F_2)]\big\rangle.\]

Conversely, any pair of embeddings of finite boolean algebras defines a point diagram as explained in next definition.
\begin{defin}
Given finite subalgebras $\mathcal B_1$ and $\mathcal B_2$ of a boolean algebra $\mathcal B$,
the {\em point diagram of $\mathcal B_1$ and $\mathcal B_2$\/} is the triple $(X;F_1,F_2)$ defined by setting $F_1=\atom(\mathcal B_1)$, $F_2=\atom(\mathcal B_2)$ and:
\[X=\big\{(a_1,a_2)\in \atom(\mathcal B_1)\times\atom(\mathcal B_2):a_1\wedge a_2\ne0\big\}.\]
\end{defin}
Note that, if $\mathcal B= \langle \mathcal B_1 \cup \mathcal B_2 \rangle$, then we have a bijection:
\[X\ni(a_1,a_2)\longmapsto a_1\wedge a_2\in\atom(\mathcal B)\]
that induces an isomorphism between $\wp(X)$ and $\mathcal B$. Moreover, the triples $(\mathcal B,\mathcal B_1,\mathcal B_2)$ and
$\big(\wp(X),\wp(F_1),\wp(F_2)\big)$ are isomorphic in the sense that the diagrams:
\[\xymatrix@C+10pt@R+5pt{%
\wp(X)\ar[r]^-\cong&\mathcal B\\
\wp(F_1)\vbox to 12pt{\vfil}\ar@{^{(}->}[u]^{\iota_1}\ar[r]_-\cong&\vbox to 12pt{\vfil}\mathcal B_1\ar@{^{(}->}[u]}\qquad\qquad\xymatrix@C+10pt@R+5pt{%
\wp(X)\ar[r]^-\cong&\mathcal B\\
\wp(F_2)\vbox to 12pt{\vfil}\ar@{^{(}->}[u]^{\iota_2}\ar[r]_-\cong&\vbox to 12pt{\vfil}\mathcal B_2\ar@{^{(}->}[u]}\]
are commutative. Finally, note that any point diagram $(X;F_1,F_2)$ can be naturally identified with the point diagram of
the subalgebras $\iota_1[\wp(F_1)]$ and $\iota_2[\wp(F_2)]$ of $\wp(X)$.

\medskip

Now that we have established an equivalence between embeddings of pairs of finite boolean algebras and point diagrams, we are going to translate the problem
of extending compatible measures to the framework of point diagrams.
\begin{defin}
Let $(X;F_1,F_2)$ be a point diagram. We say that a pair of maps $f_1\in\ell_1(F_1)$ and $f_2\in\ell_1(F_2)$ is {\em compatible\/} if the corresponding measures
$\mu_1$ and $\mu_2$ are compatible, i.e., if for every $A_1\subset F_1$ and every $A_2\subset F_2$ with:
\[\pi_1^{-1}[A_1]\cap X=\pi_2^{-1}[A_2]\cap X\]
we have $\sum_{a\in A_1}f_1(a)=\sum_{a\in A_2}f_2(a)$. By a {\em common extension\/} of $f_1$ and $f_2$, we mean a map $g\in\ell_1(X)$ whose corresponding measure
is a common extension of $\mu_1$ and $\mu_2$, i.e.:
\[f_1(a_1)=\!\!\!\!\!\sum_{\substack{a_2\in F_2\\(a_1,a_2)\in X}}\!\!\!\!g(a_1,a_2),\ a_1\in F_1\quad\text{and}\quad
f_2(a_2)=\!\!\!\!\!\sum_{\substack{a_1\in F_1\\(a_1,a_2)\in X}}\!\!\!\!g(a_1,a_2),\ a_2\in F_2.\]
Finally, we define the {\em extension constant\/} $c(X;F_1,F_2)$ of the point diagram $(X;F_1,F_2)$ to be the infimum of the set of all $c\ge0$
such that every compatible pair $f_1\in\ell_1(F_1)$, $f_2\in\ell_1(F_2)$ admits a common extension $g\in\ell_1(X)$ with $\Vert g\Vert\le c(\Vert f_1\Vert+\Vert f_2\Vert)$.
\end{defin}
Clearly the extension constant of the point diagram $(X;F_1,F_2)$ coincides with the extension constant of the pair of subalgebras
$\iota_1[\wp(F_1)]$ and $\iota_2[\wp(F_2)]$ of $\wp(X)$.

\begin{defin}
Given a point diagram $(X;F_1,F_2)$, we call the transitive closure of the union of the equivalence relations $\sim_1$ and $\sim_2$ defined in
\eqref{eq:defsim12} the {\em rook\/} equivalence relation on $X$.
\end{defin}
Note that the equivalence classes of the rook relation are precisely the atoms of the intersection $\iota_1[\wp(F_1)]\cap\iota[\wp(F_2)]$.
Elements of this intersection can be used to decompose a point diagram as explained below.

\begin{defin}
A {\em decomposition\/} of a point diagram $(X;F_1,F_2)$ is a triple of partitions:
\begin{equation}\label{eq:decomposicao}
F_1=F_1^1\cup\ldots\cup F_1^k,\quad F_2=F_2^1\cup\ldots\cup F_2^k,\quad X=X^1\cup\ldots\cup X^k
\end{equation}
with $X^i\subset F_1^i\times F_2^i$, for $i=1,\ldots,k$. The point diagram is called {\em decomposable\/} if it admits a nontrivial
decomposition, i.e., a decomposition with $k\ge2$ and $X^i\ne\emptyset$, $i=1,\ldots,k$. Otherwise, it is called {\em indecomposable}.
\end{defin}
Note that each $(X^i;F_1^i,F_2^i)$ is itself a point diagram and that:
\[X^i=\iota_1(F_1^i)=\iota_2(F_2^i),\]
so that $X^i$ belongs to the intersection $\iota_1[\wp(F_1)]\cap\iota_2[\wp(F_2)]$, for all $i=1,\ldots,k$. Conversely,
if $X=X^1\cup\ldots\cup X^k$ is a partition of $X$ into elements of the intersection $\iota_1[\wp(F_1)]\cap\iota_2[\wp(F_2)]$, then we obtain
a decomposition of $(X;F_1,F_2)$ into point diagrams $(X^i;F_1^i,F_2^i)$ by setting
$F_1^i=\iota_1^{-1}(X^i)$ and $F_2^i=\iota_2^{-1}(X^i)$. It follows that a point diagram $(X;F_1,F_2)$ is indecomposable if and only if the
intersection $\iota_1[\wp(F_1)]\cap\iota_2[\wp(F_2)]$ is trivial. Note also that, for an indecomposable point diagram $(X;F_1,F_2)$, a pair
of maps $f_1\in\ell_1(F_1)$, $f_2\in\ell_1(F_2)$ is compatible if and only if $\sum_{a\in F_1}f_1(a)=\sum_{a\in F_2}f_2(a)$.

\begin{lem}\label{max}
If \eqref{eq:decomposicao} is a decomposition of a point diagram $(X;F_1,F_2)$, then:
\[c(X;F_1,F_2)=\max_{1\le i\le k}c(X^i;F_1^i,F_2^i).\]
\end{lem}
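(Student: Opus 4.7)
The plan is to establish both inequalities $c(X;F_1,F_2)\ge\max_i c(X^i;F_1^i,F_2^i)$ and $c(X;F_1,F_2)\le\max_i c(X^i;F_1^i,F_2^i)$, each by a restriction/extension argument that exploits a single structural observation about the decomposition.

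The \emph{key structural fact} is the following: since the $X^i$ partition $X$ and $X^i\subset F_1^i\times F_2^i$, whenever $a_1\in F_1^i$ and $(a_1,a_2)\in X$ the unique index $j$ with $(a_1,a_2)\in X^j$ must satisfy $a_1\in F_1^j$, forcing $j=i$ and hence $a_2\in F_2^i$. A symmetric statement holds with coordinates swapped. This has two immediate consequences used throughout: first, for $A\subset F_1^i$ one has $\iota_1(A)=\iota_1^i(A)\subset X^i$ and analogously for $\iota_2$, so compatibility conditions on $(X;F_1,F_2)$ and on $(X^i;F_1^i,F_2^i)$ match up on subsets of $F_j^i$; second, the marginal sums of any $g\in\ell_1(X)$ over the fiber of a point $a_1\in F_1^i$ are supported entirely in $X^i$.

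For the inequality $c(X^i;F_1^i,F_2^i)\le c(X;F_1,F_2)$, I would fix $i$, take a compatible pair $f_1^i\in\ell_1(F_1^i)$, $f_2^i\in\ell_1(F_2^i)$, and extend them by zero to $f_1\in\ell_1(F_1)$, $f_2\in\ell_1(F_2)$. The structural fact makes the extended pair compatible on $(X;F_1,F_2)$, so by definition of $c(X;F_1,F_2)$ there is a common extension $g\in\ell_1(X)$ with $\Vert g\Vert\le c(X;F_1,F_2)(\Vert f_1^i\Vert+\Vert f_2^i\Vert)$. Restricting $g$ to $X^i$ yields a common extension of $(f_1^i,f_2^i)$, because for $a_1\in F_1^i$ every pair $(a_1,a_2)\in X$ already lies in $X^i$, so no mass is lost. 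Since $\Vert g\vert_{X^i}\Vert\le\Vert g\Vert$, the claimed bound follows.

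For the converse, set $c'=\max_i c(X^i;F_1^i,F_2^i)$, take a compatible pair $(f_1,f_2)$ on $(X;F_1,F_2)$, and split $f_j^i=f_j\vert_{F_j^i}$. The structural fact shows each restricted pair is compatible on $(X^i;F_1^i,F_2^i)$, so there exist common extensions $g^i\in\ell_1(X^i)$ with $\Vert g^i\Vert\le c'(\Vert f_1^i\Vert+\Vert f_2^i\Vert)$. Glue them into $g\in\ell_1(X)$ by declaring $g\vert_{X^i}=g^i$; the same fact ensures that the marginal sums of $g$ recover $f_1$ and $f_2$. Finally $\Vert g\Vert=\sum_i\Vert g^i\Vert\le c'\sum_i(\Vert f_1^i\Vert+\Vert f_2^i\Vert)=c'(\Vert f_1\Vert+\Vert f_2\Vert)$, since $F_j=\bigsqcup_i F_j^i$.

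There is no real obstacle here: the lemma is essentially bookkeeping organized around the block-diagonal structure induced by the decomposition. The only care required is to verify that compatibility and the common-extension property survive restriction to $X^i$ and gluing across the $X^i$, and both reductions are immediate from the observation that points of $X$ lying over $F_1^i$ remain inside $X^i$.
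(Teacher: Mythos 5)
Your proof is correct and follows essentially the same route as the paper's: the paper's own argument consists precisely of the two observations that compatibility and the common-extension property both decompose blockwise across the $X^i$, which is what your ``key structural fact'' makes explicit. The extra detail you supply (zero-extension for the lower bound, gluing for the upper bound, and the additivity of the $\ell_1$-norms) is exactly the bookkeeping the paper leaves implicit.
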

\begin{proof}
Note that $f_1 \in \ell_1(F_1)$ and $f_2 \in \ell_1(F_2)$ are compatible if and only if $f_1\vert_{F_1^i}$ and $f_2\vert_{F_2^i}$ are compatible for all $i$. Moreover, $g \in \ell_1(X)$ is a common extension of $f_1$ and $f_2$ if and only if $g\vert_{X^i}$ is a common extension of $f_1\vert_{F_1^i}$ and $f_2\vert_{F_2^i}$ for all $i$.
\end{proof}

\begin{lem}\label{passo}
Let $(X;F_1,F_2)$ be a point diagram. Assume that the sets $F_1$ and $F_2$ are written as disjoint unions:
\[F_1=I_1\cup J_1\quad \text{and} \quad F_2=I_2\cup J_2\]
such that the following conditions are satisfied:
\begin{itemize}
\item[(i)] $\big(X\cap(J_1\times J_2);J_1,J_2\big)$ is an indecomposable point diagram;
\item[(ii)] for all $a_1 \in I_1$, there exists $a_2 \in J_2$ with $(a_1,a_2) \in X$;
\item[(iii)] for all $a_2 \in I_2$, there exists $a_1 \in J_1$ with $(a_1,a_2) \in X$.
\end{itemize}
Then $(X;F_1,F_2)$ is indecomposable and:
\begin{equation}\label{eq:estimac}
c(X;F_1,F_2)\le c\big(X\cap(J_1\times J_2);J_1,J_2\big)+1.
\end{equation}
\end{lem}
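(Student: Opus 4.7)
The plan is to first establish indecomposability, and then construct an extension in two stages: handle the ``$I$-part'' by a direct anchoring choice, and reduce the remaining constraints to the inner diagram in (i).

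For indecomposability, I will verify that the rook equivalence on $X$ has a single class. Any $(a_1,a_2)\in X$ with $a_1\in I_1$ can be rook-linked to $X\cap(J_1\times J_2)$ as follows: by (ii), pick $b_2\in J_2$ with $(a_1,b_2)\in X$, and since the sub-diagram in (i) projects onto $J_2$, there is $c_1\in J_1$ with $(c_1,b_2)\in X\cap(J_1\times J_2)$; then $(a_1,a_2)\sim_1(a_1,b_2)\sim_2(c_1,b_2)$. A symmetric argument via (iii) handles points with $a_2\in I_2$. By (i) the set $X\cap(J_1\times J_2)$ is a single rook-class, so all of $X$ is one rook-class, i.e., $(X;F_1,F_2)$ is indecomposable.

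For the bound, set $c_0=c\big(X\cap(J_1\times J_2);J_1,J_2\big)$ and fix a compatible pair $(f_1,f_2)$; by the indecomposability just proved, compatibility reduces to $\sum_{F_1}f_1=\sum_{F_2}f_2$. Using (ii) and (iii), fix selector functions $\psi\colon I_1\to J_2$ with $(a_1,\psi(a_1))\in X$ and $\phi\colon I_2\to J_1$ with $(\phi(a_2),a_2)\in X$. Since the regions $I_1\times I_2$, $I_1\times J_2$ and $J_1\times I_2$ are pairwise disjoint, I can unambiguously define $g$ on $X\setminus(J_1\times J_2)$ by putting $g(a_1,\psi(a_1))=f_1(a_1)$ for $a_1\in I_1$, $g(\phi(a_2),a_2)=f_2(a_2)$ for $a_2\in I_2$, and $g=0$ at every other point off $J_1\times J_2$. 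With this partial definition, the marginals of $g$ are already correct along every $a_1\in I_1$ and every $a_2\in I_2$; the remaining requirement is to extend $g$ onto $X\cap(J_1\times J_2)$ so that its marginals realise the corrected weights $\tilde f_1(a_1)=f_1(a_1)-\!\!\!\sum_{a_2\in I_2,\,\phi(a_2)=a_1}\!\!\!f_2(a_2)$ on $J_1$ and the mirror $\tilde f_2$ on $J_2$.

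A short telescoping using $\sum_{F_1}f_1=\sum_{F_2}f_2$ yields $\sum_{J_1}\tilde f_1=\sum_{J_2}\tilde f_2$, so by (i) the pair $(\tilde f_1,\tilde f_2)$ is compatible in the inner diagram. For any $c'>c_0$, extend it to some $\tilde g$ on $X\cap(J_1\times J_2)$ with $\Vert\tilde g\Vert\le c'(\Vert\tilde f_1\Vert+\Vert\tilde f_2\Vert)$, and set $g=\tilde g$ there. The triangle inequality gives $\Vert\tilde f_1\Vert\le\Vert f_1|_{J_1}\Vert+\Vert f_2|_{I_2}\Vert$ and symmetrically for $\tilde f_2$, so $\Vert\tilde f_1\Vert+\Vert\tilde f_2\Vert\le\Vert f_1\Vert+\Vert f_2\Vert$; meanwhile the mass placed off $J_1\times J_2$ contributes exactly $\Vert f_1|_{I_1}\Vert+\Vert f_2|_{I_2}\Vert\le\Vert f_1\Vert+\Vert f_2\Vert$. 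Adding yields $\Vert g\Vert\le(1+c')(\Vert f_1\Vert+\Vert f_2\Vert)$, and $c'\downarrow c_0$ gives \eqref{eq:estimac}. The only mildly delicate point is the compatibility verification for $(\tilde f_1,\tilde f_2)$; everything else is bookkeeping on the disjoint regions plus a single triangle-inequality estimate.
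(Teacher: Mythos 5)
Your proposal is correct and follows essentially the same route as the paper's proof: the same rook-linking argument for indecomposability, the same selector functions and corrected weights $\tilde f_1,\tilde f_2$ reduced to the inner diagram, and the same bookkeeping for the norm bound (you merely spell out the triangle-inequality estimate and handle the infimum in the definition of the extension constant via $c'\downarrow c_0$ rather than leaving it implicit).
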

\begin{proof}
To see that $(X;F_1,F_2)$ is indecomposable, note that every element of $X$ is rook equivalent to an element of $X\cap(J_1\times J_2)$, so that
the only rook equivalence class is $X$ itself. Now let us prove \eqref{eq:estimac}.
Let $f_1 \in \ell_1(F_1)$ and $f_2 \in \ell_1(F_2)$ be compatible. By conditions (ii) and (iii), there exist functions $\phi:I_1 \to J_2$ and $\psi:I_2 \to J_1$ such that:
\[\big(a_1,\phi(a_1)\big) \in X \quad \text{and} \quad \big(\psi(a_2),a_2\big) \in X,\]
for all $ a_1\in I_1$ and $a_2 \in I_2$. We define $\tilde f_1 \in \ell_1(J_1)$ and $\tilde f_2 \in \ell_1(J_2)$ by setting:
\begin{align*}
&\tilde f_1(a_1)=f_1(a_1)-\!\!\!\!\!\!\!\!\sum_{a_2 \in \psi^{-1}(a_1)}\!\!\!\!\!\!f_2(a_2), \ a_1 \in J_1 \quad \text{and}\\[3pt]
&\tilde f_2(a_2)=f_2(a_2)-\!\!\!\!\!\!\!\!\sum_{a_1 \in \phi^{-1}(a_2)}\!\!\!\!\!\!f_1(a_1), \ a_2 \in J_2.
\end{align*}
One readily checks that:
\[\sum_{a_1\in J_1}\tilde f_1(a_1)=\sum_{a_2\in J_2}\tilde f_2(a_2)\]
so that, by (i), $\tilde f_1$ and $\tilde f_2$ are compatible. Thus, there exists a common extension $\tilde g \in \ell_1\big(X\cap(J_1\times J_2)\big)$ of $\tilde f_1$ and $\tilde f_2$ such that:
\[\Vert \tilde g \Vert \le c\big(X\cap(J_1\times J_2);J_1,J_2\big)(\Vert \tilde f_1 \Vert+ \Vert \tilde f_2 \Vert).\]
A common extension $g \in \ell_1(X)$ of $f_1$ and $f_2$ is now obtained by setting $g(a_1,a_2)=f_1(a_1)$, for $a_1 \in I_1$ and $a_2=\phi(a_1)$, $g(a_1,a_2)=f_2(a_2)$, for $a_2 \in I_2$ and $a_1=\psi(a_2)$, $g(a_1,a_2)=\tilde g(a_1,a_2)$, for $(a_1,a_2) \in X \cap (J_1 \times J_2)$ and $g(a_1,a_2)=0$, otherwise. A straightforward computation shows that:
\[\Vert g \Vert \le \big[c\big(X\cap(J_1\times J_2);J_1,J_2\big)+1\big] (\Vert f_1 \Vert + \Vert f_2 \Vert).\qedhere\]
\end{proof}

\begin{defin}\label{type}
Given a nonnegative integer $k$, we say that a point diagram $(X; F_1,F_2)$ is {\em of type $k$\/} if the sets $F_1$ and $F_2$ can be written
as disjoint unions:
\[F_1=F_1(0)\cup\ldots\cup F_1(k)\quad\text{and}\quad F_2=F_2(0)\cup\ldots\cup F_2(k)\]
such that the following conditions hold:
\begin{itemize}
\item[(a)] $F_1(k)$ and $F_2(k)$ are singletons;
\item[(b)] $F_1(k) \times F_2(k) \subset X$;
\item[(c)] for every $i=0,\ldots,k-1$ and every $a_1 \in F_1(i)$, there exists $a_2$ in $\bigcup_{j=i+1}^kF_2(j)$ with $(a_1,a_2) \in X$;
\item[(d)] for every $i=0,\ldots,k-1$ and every $a_2 \in F_2(i)$, there exists $a_1$ in $\bigcup_{j=i+1}^kF_1(j)$ with $(a_1,a_2) \in X$.
\end{itemize}
\end{defin}

\begin{lem}\label{papa}
If $(X;F_1,F_2)$ is a point diagram of type $k$, then it is indecomposable and $c(X;F_1,F_2) \le k+\frac12$.
\end{lem}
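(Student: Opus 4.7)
The plan is to prove this by induction on $k$, using Lemma~\ref{passo} as the inductive step and a direct verification for the base case. The indexing in Definition~\ref{type} is set up precisely so that peeling off the $0$-th layer of $F_1$ and $F_2$ leaves a type-$(k-1)$ diagram.

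For the base case $k=0$, both $F_1$ and $F_2$ are singletons, so by condition (b) $X = F_1 \times F_2$ is also a singleton. This diagram is trivially indecomposable, and any compatible pair $f_1, f_2$ must satisfy $f_1(a_1) = f_2(a_2) = c$ for some $c \in \R$; the unique extension $g$ then has $\|g\| = |c| = \tfrac12 (\|f_1\| + \|f_2\|)$, so $c(X;F_1,F_2) \le \tfrac12$, as desired.

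For the inductive step, suppose the result holds for type $k-1$ and let $(X;F_1,F_2)$ be of type $k$. I would set $I_1 = F_1(0)$, $J_1 = F_1(1) \cup \ldots \cup F_1(k)$, and similarly $I_2 = F_2(0)$, $J_2 = F_2(1) \cup \ldots \cup F_2(k)$, in order to apply Lemma~\ref{passo}. The first task is to check that $(X \cap (J_1 \times J_2); J_1, J_2)$ is of type $k-1$: reindex by $F'_s(i) = F_s(i+1)$ for $s=1,2$ and $i = 0, \ldots, k-1$. Condition (a) for the smaller diagram holds because $F'_s(k-1) = F_s(k)$ is a singleton; condition (b) holds because $F_1(k) \times F_2(k) \subset X$ and automatically lies in $J_1 \times J_2$; and conditions (c), (d) are inherited from the corresponding conditions on the original diagram, since for $a_1 \in F_1(i)$ with $i \ge 1$, the witness $a_2$ provided by the original condition (c) lies in $\bigcup_{j=i+1}^k F_2(j) \subset J_2$, hence the pair lies in $X \cap (J_1 \times J_2)$. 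By the inductive hypothesis, this smaller diagram is indecomposable with $c(X \cap (J_1 \times J_2); J_1, J_2) \le (k-1) + \tfrac12$. Conditions (ii) and (iii) of Lemma~\ref{passo} are exactly the original conditions (c), (d) applied to $i=0$. Lemma~\ref{passo} then yields indecomposability of $(X;F_1,F_2)$ and
\[c(X;F_1,F_2) \le c\big(X \cap (J_1 \times J_2); J_1, J_2\big) + 1 \le (k-1) + \tfrac12 + 1 = k + \tfrac12,\]
completing the induction.

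There is no real obstacle here beyond bookkeeping: the bound $k + \tfrac12$ telescopes from the base constant $\tfrac12$ plus one unit per application of Lemma~\ref{passo}. The only thing to be a bit careful about is verifying that the reindexed family $(F'_s(i))$ still satisfies clauses (c) and (d) of Definition~\ref{type}, which is immediate from the fact that stripping off layer $0$ preserves the "ascending witness" structure.
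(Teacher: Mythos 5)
Your proof is correct and follows exactly the paper's argument: induction on $k$, peeling off $F_1(0)$ and $F_2(0)$ and applying Lemma~\ref{passo} to the type-$(k-1)$ diagram $\big(X\cap(J_1\times J_2);J_1,J_2\big)$. The paper merely states the inductive step in one line, whereas you spell out the base case and the verification that the stripped diagram has type $k-1$; the content is the same.
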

\begin{proof}
We proceed by induction on $k$. The case $k=0$ is trivial. If $(X;F_1,F_2)$ is a point diagram of type $k$, with $k \ge 1$, then $\big(X\cap(J_1\times J_2);J_1,J_2\big)$ is a point diagram of type $k-1$, where $J_1=\bigcup_{i=1}^k F_1(i)$ and $J_2=\bigcup_{i=1}^k F_2(i)$. To conclude the proof, set $I_1=F_1(0)$, $I_2=F_2(0)$ and apply Lemma~\ref{passo}.
\end{proof}

\end{section}

\begin{section}{Main results}
\label{sec:mainresults}

The goal of this section is to prove the following result.

\begin{teo}\label{teomainLindo}
If $K$ is a compact Hausdorff space with finite height $M$, then the boolean algebra $\clop(K)$ has $\LEP\!\big(M-\frac12\big)$.
\end{teo}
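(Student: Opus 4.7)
My plan is to build a cofinal family $\Omega$ of finite subalgebras of $\clop(K) \cong \clop^{\Gamma_0}(K)$ such that any uncountable subfamily $\Omega_0 \subset \Omega$, after a $\Delta$-system extraction, contains a pair $\mathcal{B}_1,\mathcal{B}_2$ whose point diagram decomposes, via Lemma~\ref{max}, into indecomposable pieces each of type at most $N+1 = M-1$. Lemma~\ref{papa} then bounds the extension constant on each piece by $(M-1)+\frac{1}{2}=M-\frac{1}{2}$, so $c(\mathcal{B}_1,\mathcal{B}_2)\le M-\frac{1}{2}$, establishing $\LEP(M-\frac{1}{2})$.

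Using the description of $\clop^{\Gamma_0}(K)$ from Subsection~\ref{sub:finiteheight}, I take $\mathcal{B} \in \Omega$ to be a subalgebra of the form $\mathcal{B} = \langle V_\gamma^n : \gamma \in T(\mathcal{B})\rangle$, where $T(\mathcal{B}) \subset \bigsqcup_{n=0}^{N+1}\Gamma_n$ is finite and comes equipped with a \emph{parent map}: every $\gamma \in T(\mathcal{B})$ at level $n \le N$ is assigned a parent $\gamma' \in T(\mathcal{B})$ at some level $n' > n$ with $V_\gamma^n$ contained in $V_{\gamma'}^{n'}$ modulo $\mathcal{J}_{n-1}$. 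Iterating, each generator index traces up to a root in $T(\mathcal{B}) \cap \Gamma_{N+1}$; since~\eqref{eq:partitionGamma0} guarantees that such chains of parents always exist, $\Omega$ is cofinal in the lattice of finite subalgebras. For the $\Delta$-system reduction, given an uncountable $\Omega_0 \subset \Omega$, I apply the classical lemma to $\{T(\mathcal{B}) : \mathcal{B} \in \Omega_0\}$ and pigeonhole further so that all $T(\mathcal{B})$ in the resulting uncountable $\Omega_1$ share the same cardinality, the same abstract parent-tree, and the same root $R$ pointwise---in particular all top-level ancestors of the ``free'' indices belong to $R$. For any two distinct $\mathcal{B}_1,\mathcal{B}_2 \in \Omega_1$, the non-root indices are then disjoint but the combinatorial configurations over $R$ are identical.

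The core of the argument is the type analysis. The atoms of $\mathcal{B}_1 \cap \mathcal{B}_2$ decompose the point diagram into indecomposable pieces; I fix one such piece $E$, which lies in an atom of $\langle R\rangle$ and hence, by root-closure, inside a single level-$(N+1)$ set $V_{\gamma_*}^{N+1}$ with $\gamma_* \in R$. I stratify the atoms of $\mathcal{B}_i$ inside $E$ into layers $F_i(0),\ldots,F_i(N+1)$ according to the highest CB level $n$ of a non-root generator of $\mathcal{B}_i$ that is above the atom modulo $\mathcal{J}_{n-1}$, with $F_i(N+1)$ taken to be the single ``pure'' atom disjoint from every non-root generator of $\mathcal{B}_i$. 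Condition~(a) of Definition~\ref{type} is immediate, (b) follows because both pure atoms contain the cofinite-in-$V_{\gamma_*}^{N+1}$ portion not consumed by the finitely many non-root generators, and (c), (d) follow from the antichain-modulo-$\mathcal{J}_{n-1}$ property of $(V_\gamma^n)_{\gamma \in \Gamma_n}$: a level-$n$ atom of $\mathcal{B}_i$ is, modulo $\mathcal{J}_{n-1}$, untouched by non-root level-$n$ generators on the opposite side and so retains nonempty intersection with a strictly higher-layer atom there.

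The main obstacle is the careful verification of these nonemptiness clauses: confirming that each ``pure'' atom is a single nonempty atom, and that every level-$i$ atom on one side genuinely meets a strictly higher-layer atom on the other. Success hinges on the closure rule of Step~1 providing enough parents (in practice, possibly a full chain of parents at every intermediate level) and on a fine enough pigeonhole refinement in Step~2 so that the two sides mirror each other symbol-for-symbol over $R$. Once these are in place, the type bound $N+1$ holds uniformly on every indecomposable piece, and Lemmas~\ref{papa} and~\ref{max} combined with Definition~\ref{definLep} deliver $\clop(K) \in \LEP(M-\frac{1}{2})$.
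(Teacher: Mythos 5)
Your overall architecture does match the paper's: a cofinal family of subalgebras generated by finitely many $V^n_\gamma$'s, a decomposition of the point diagram along $\mathcal B_1\cap\mathcal B_2$, a type-$k$ bound on each piece, and then Lemmas~\ref{max} and~\ref{papa}. (One structural remark before the gaps: the $\Delta$-system and pigeonhole extraction are unnecessary --- the paper proves that \emph{every} pair of suitably closed algebras has extension constant at most $N+\tfrac32$, so the uncountability clause of Definition~\ref{definLep} is met trivially, and no ``mirroring over $R$'' is ever used.) However, two of the steps you defer are not routine bookkeeping; as stated they fail.

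First, your closure condition is too weak to control the atoms. A parent map records only that $V^m_\delta\subset V^{n}_{\delta'}$ \emph{modulo} $\mathcal J_{m-1}$; the exceptional set $V^m_\delta\setminus V^{n}_{\delta'}\in\mathcal J_{m-1}$ need not be covered by the finitely many lower-level generators you happened to put into $T(\mathcal B)$. Then $V^m_\delta\setminus V^{[0,m[}_{\mathcal G}$ is split by $V^{n}_{\delta'}$ into two nonempty pieces and is not an atom, and the stratification you build on such ``atoms'' collapses. What is needed is that, for \emph{every} pair of generators, the relevant containment or disjointness be witnessed by generators already present --- this is exactly the paper's admissibility conditions (a) and (b), arranged by downward recursion on the level in Proposition~\ref{cofinalprova}. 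Second, your layering ``highest level of a non-root generator above the atom, with the pure atom at layer $N+1$'' can violate condition (a) of Definition~\ref{type}: an indecomposable piece sitting over a \emph{common} generator $V^n_\gamma$ with $n<N+1$ may have its distinguished atom $V^n_\gamma\setminus V^{[0,n[}_{\mathcal G}$ contained in a non-root generator of level greater than $n$, so your layer $N+1$ is empty and there is no singleton top layer. The correct stratification is piece-relative: the piece over a common level-$n$ generator gets type $n$, with singleton top layer $V^n_\gamma\setminus V^{[0,n[}_{\mathcal G}$ and layer $m<n$ consisting of the atoms of non-common level-$m$ generators not separated off by intermediate common ones (Lemmas~\ref{quebraatom} and~\ref{borboleta}). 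Finally, the nonemptiness clauses you flag as ``the main obstacle'' are where essentially all the work lies, and they come not from symmetry but from ideal membership: $V^m_\delta\setminus V^{[0,m[}_{\mathcal G}\notin\mathcal J_{m-1}$ while its intersection with the union of the opposite side's layers up to $m$ lies in $\mathcal J_{m-1}$; this in turn rests on a nontrivial inductive lemma showing that $\mathcal G\cap\mathcal H$ still witnesses the antichain conditions. None of this is supplied by your sketch.
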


This immediately implies the following generalization of \cite[Corollary~5.3]{Plebanek}.

\begin{cor}
Let $\kappa<\mathfrak c$ and assume that $\MA(\kappa)$ holds. If $K$ is a separable compact Hausdorff space with finite height and weight less than or equal to $\kappa$, then every twisted sum of $c_0$ and $C(K)$ is trivial.
\end{cor}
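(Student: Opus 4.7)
The plan is to bound extension constants by decomposing point diagrams into indecomposable pieces and invoking Lemma~\ref{papa}. Write $N = M-1$, so that $\Gamma_{N+1}$ is finite and nonempty, and take $\Omega$ to be the family of subalgebras $\mathcal{B}(F) = \langle V^n_\gamma : \gamma \in F \rangle$, where $F$ ranges over finite subsets of $\bigcup_{n=0}^{N+1}\Gamma_n$ that contain $\Gamma_{N+1}$. Cofinality of $\Omega$ in the lattice of finite subalgebras follows from~\eqref{geraB}. Given any uncountable $\Omega' \subseteq \Omega$, I would apply the $\Delta$-system lemma levelwise (once per CB layer $\Gamma_n$, $n=0,\ldots,N$) to extract an uncountable subfamily whose index sets form a $\Delta$-system with common root $R$, and, by a standard further thinning grouping configurations by isomorphism type over $\mathcal{B}(R)$, arrange that any pair $F_1, F_2$ in the refined subfamily yields isomorphic point diagrams $\mathcal{D} = (X;F_1,F_2)$.

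Fix such $F_1, F_2$. The partition $\Gamma_0 = \bigcup_{\gamma \in \Gamma_{N+1}} V^{N+1}_\gamma$ from~\eqref{eq:partitionGamma0} lies in both $\mathcal{B}(F_1)$ and $\mathcal{B}(F_2)$, so by Lemma~\ref{max} it suffices to prove that $c(\mathcal{D}^\gamma) \le N + \tfrac{1}{2}$ for every $\gamma \in \Gamma_{N+1}$, where $\mathcal{D}^\gamma$ is the piece of $\mathcal{D}$ carved out by $V^{N+1}_\gamma$. The key claim is that each $\mathcal{D}^\gamma$ is a point diagram of type $N$ in the sense of Definition~\ref{type}. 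I would organize the atoms of $\mathcal{B}(F_i)$ restricted to $V^{N+1}_\gamma$ into $N+1$ levels according to the coarsest CB height at which they are separated from the deepest cell: level $j = N$ consists of a single atom (the deepest one, common to both algebras thanks to the common root $R$), and lower levels $j < N$ consist of atoms first separated from the top atom by a generator at CB level $N-j$, giving conditions (a) and (b) of Definition~\ref{type}.

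Conditions (c) and (d) demand that every atom at level $j < N$ on one side be $X^\gamma$-related to some atom on the other side at a strictly higher level. This should follow from the antichain-modulo-ideal description in Section~\ref{sec:main}: any new generator $V^n_\delta$ with $\delta \in F_i \setminus R$ at CB level $n$ is a separator only modulo $\mathcal{J}_{n-1}$, so the atoms it introduces must, outside a set in $\mathcal{J}_{n-1}$, be absorbed by higher-CB-level generators on the other side, eventually reaching the common top atom. An alternative approach is to iterate Lemma~\ref{passo} exactly $N$ times, peeling off CB layers one at a time, starting from the identity-type subdiagram of constant $\tfrac{1}{2}$ determined by $R$ alone.

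The main obstacle is precisely the verification of (c) and (d): one must choose the $\Delta$-system refinement delicately enough that for every new generator on one side at CB level $n < N$ the atoms it creates actually interlock with generators on the other side at every strictly higher CB level, so the required higher-level witnesses are found inside $X^\gamma$. Once this structural fact is in place, Lemma~\ref{papa} yields $c(\mathcal{D}^\gamma) \le N + \tfrac{1}{2} = M - \tfrac{1}{2}$, and Lemma~\ref{max} lifts this to $c(\mathcal{B}(F_1), \mathcal{B}(F_2)) \le M - \tfrac{1}{2}$, establishing $\LEP(M - \tfrac{1}{2})$.
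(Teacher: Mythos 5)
Your proposal does not prove the corollary; at best it sketches a proof of Theorem~\ref{teomainLindo}. The statement to be proved is that every twisted sum of $c_0$ and $C(K)$ is trivial, and your argument ends with ``establishing $\LEP(M-\tfrac12)$'' without ever connecting $\LEP$ to twisted sums. That connection is the content of Theorems~3.4 and~5.1 of \cite{Plebanek}: under $\MA(\kappa)$, for a separable zero-dimensional compact Hausdorff $K$ of weight at most $\kappa<\mathfrak c$ whose clopen algebra has $\LEP$, every twisted sum of $c_0$ and $C(K)$ is trivial (zero-dimensionality holds here because a finite height compact space is scattered). The paper's proof of the corollary consists precisely of this citation together with Theorem~\ref{teomainLindo}. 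Tellingly, none of the hypotheses of the corollary --- $\MA(\kappa)$, separability, weight $\le\kappa$ --- is used anywhere in your argument, which is a sign that you are proving a different statement. This reduction step must be made explicit.

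Even viewed as a proof of the $\LEP$ assertion, your route diverges from the paper's and has an admitted hole exactly where the work lies. The paper does not use the $\Delta$-system lemma: it restricts to the cofinal family of \emph{admissible} sequences $\mathcal G$ (whose conditions force the generators to witness the antichain structure modulo the ideals $\mathcal J_{n-1}$) and proves that \emph{every} pair $\mathcal B_{\mathcal G},\mathcal B_{\mathcal H}$ from this family has extension constant at most $N+\tfrac32$, so the uncountability clause of Definition~\ref{definLep} is satisfied vacuously. Your coarser family $\mathcal B(F)$ drops admissibility, so you lose the explicit atom description $A^{n,\gamma}_{\mathcal G}=V^n_\gamma\setminus V^{[0,n[}_{\mathcal G}$ of Corollary~\ref{thm:corAGatom}, and the verification of conditions (c) and (d) of Definition~\ref{type} --- which you yourself call ``the main obstacle'' --- is exactly the substance of the lemma on $\mathcal G\cap\mathcal H$ and of Lemma~\ref{borboleta}. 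Those proofs rest on admissibility and on ideal-membership arguments ($A^{m,\delta}_{\mathcal G}\notin\mathcal J_{m-1}$ while the candidate covers lie in $\mathcal J_{m-1}$), not on a common root or on isomorphism of configurations, and a $\Delta$-system refinement by itself does not supply the required interlocking. Two smaller inaccuracies: with the paper's conventions the correct normalization is $N=M-2$ (your $N=M-1$ makes ``$\Gamma_{N+1}$ finite and nonempty'' inconsistent with height $M$), and the top atoms $A^{N+1,\gamma}_{\mathcal G}$ and $A^{N+1,\gamma}_{\mathcal H}$ are not equal even over a common root --- the paper only shows that they intersect, again by an ideal argument.
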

\begin{proof}
It follows from Theorem~\ref{teomainLindo}, \cite[Theorem~5.1]{Plebanek} and \cite[Theorem~3.4]{Plebanek}.
\end{proof}

To prove Theorem~\ref{teomainLindo}, we go back to the setup of Subsection~\ref{sub:finiteheight} and we set $\mathcal B=\clop^{\Gamma_0}(K)$. Recall that $\mathcal B$ is isomorphic to $\clop(K)$ and thus, it is sufficient to prove that $\mathcal B$ has $\LEP\!\big(M-\frac12\big)$. Our first step is to exhibit a suitable cofinal subset of the lattice of finite subalgebras of $\mathcal B$. We need some notation and terminology.
Set:
\[\mathbb G=\big\{(G_n)_{0 \le n \le N+1}: \text{$G_n$ is a finite subset of $\Gamma_n$, $n=0,1,\ldots,N+1$}\big\}\]
and for every $\mathcal G=(G_n)_{0 \le n \le N+1}\in \mathbb G$ and every subset $S$ of $\{0,1,\ldots, N+1\}$ denote by $V^S_{\mathcal G}$ the union:
\[V^S_{\mathcal G}=\bigcup\big\{V_\gamma^n: \gamma \in G_n, \ n \in S\big\}.\]

Each sequence $\mathcal G=(G_n)_{0 \le n \le N+1}$ in $\mathbb G$ defines a finite subalgebra:
\[\mathcal B_{\mathcal G}=\big\langle\big\{V_\gamma^n: \gamma \in G_n, \ n=0,1,\ldots,N+1\big\}\big\rangle\]
of $\mathcal B$ whose set of atoms we denote by $F_{\mathcal G}$.

The cofinal subset of finite subalgebras of $\mathcal B$ will consist of those $\mathcal B_{\mathcal G}$ such that the fact that $(V_\gamma^n)_{\gamma \in G_n}$ is an antichain in $\mathcal A_{n-1}$ modulo $\mathcal J_{n-1}$ is witnessed by $\mathcal G$. More precisely, we give the following definition.

\begin{defin}
We say that $\mathcal G=(G_n)_{0 \le n \le N+1}\in \mathbb G$ is {\em admissible\/} if the following conditions hold:
\begin{itemize}
\item[(a)] $V_\gamma^n \cap V_\delta^n \subset V^{\left[0,n\right[}_{\mathcal G}$, for all $n=1,\ldots, N+1$ and all $\gamma, \delta \in G_n$ with $\gamma \ne \delta$;
\item[(b)] either $V^m_\gamma \setminus V^n_\delta$ is contained in $V^{\left[0,m\right[}_{\mathcal G}$ or $V^m_\gamma \cap V^n_\delta$ is contained in $V^{\left[0,m\right[}_{\mathcal G}$, for all $m,n=1,\ldots, N+1$ with $m<n$ and all $\gamma \in G_m$ and $\delta \in G_n$;
\item[(c)] $G_{N+1}=\Gamma_{N+1}$.
\end{itemize}
\end{defin}

\begin{prop}\label{cofinalprova}
The collection:
\begin{equation}\label{escrito}
\big\{\mathcal B_{\mathcal G}: \text{$\mathcal G \in \mathbb G$ admissible}\big\}
\end{equation}
is cofinal in the lattice of finite subalgebras of $\mathcal B$.
\end{prop}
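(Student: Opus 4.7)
The plan is to reduce the cofinality claim to the task of enlarging any given $\mathcal G_0 \in \mathbb G$ to an admissible $\mathcal G \ge \mathcal G_0$ (pointwise). The previous proposition shows that $\mathcal B$ is generated by $\{V^n_\gamma : \gamma \in \Gamma_n,\ 0 \le n \le N+1\}$, so every finite subalgebra of $\mathcal B$ sits inside some $\mathcal B_{\mathcal G_0}$ with $\mathcal G_0 \in \mathbb G$, and clearly $\mathcal B_{\mathcal G_0} \subseteq \mathcal B_{\mathcal G}$ whenever $G^0_n \subseteq G_n$ for every $n$. Condition (c) of admissibility is handled immediately by setting $G_{N+1} := \Gamma_{N+1}$, which is finite since $K^{(N+1)}$ is.

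The construction of $\mathcal G$ rests on two structural observations drawn from Subsection~\ref{sub:finiteheight}. First, whenever $\gamma \ne \delta$ lie in $\Gamma_n$, the antichain property modulo $\mathcal J_{n-1}$ yields $V^n_\gamma \cap V^n_\delta \in \mathcal J_{n-1}$; since $\mathcal J_{n-1}$ is the ideal of $\mathcal B$ generated by $\{V^i_\eta : i < n,\ \eta \in \Gamma_i\}$, this intersection is contained in some \emph{finite} union $\bigcup_j V^{i_j}_{\eta_j}$ with all $i_j < n$. Second, for $m < n$, $\gamma \in \Gamma_m$ and $\delta \in \Gamma_n$, the monotonicity $\mathcal A_{n-1} \subseteq \mathcal A_m$ ensures $V^n_\delta \in \mathcal A_m$, so by definition of $\mathcal A_m$ the set $V^n_\delta$ is a separator of $(V^m_\eta)_{\eta \in \Gamma_m}$ modulo $\mathcal J_{m-1}$; hence exactly one of $V^m_\gamma \cap V^n_\delta$ or $V^m_\gamma \setminus V^n_\delta$ lies in $\mathcal J_{m-1}$, and is therefore covered by a finite union $\bigcup_j V^{i_j}_{\eta_j}$ with all $i_j < m$.

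With these facts, I would enlarge $\mathcal G_0$ by iteratively adjoining witnessing $\eta_j$'s to the appropriate $G_{i_j}$ whenever a current pair violates (a) or (b). The crucial feature is that every added generator has rank \emph{strictly less} than the rank(s) of the pair that triggered it. Finiteness of each $G_n$ then follows by reverse induction on $n$: $G_{N+1}$ is finite, and assuming every $G_r$ with $r > n$ is finite, only finitely many pairs at higher levels can trigger additions to $G_n$, each contributing finitely many generators; hence $G_n$ remains finite.

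The main obstacle I expect is the bookkeeping required to guarantee termination with \emph{every} pair simultaneously covered, since each newly added generator at level $i$ itself participates in further (a)- and (b)-checks that may force additions at still lower levels. This cascade is controlled by the well-foundedness of the rank: no violation at level $i$ forces additions at level $\ge i$, so performing the closure level by level from $N+1$ downward terminates in $N+2$ stages, producing an admissible $\mathcal G$ with $\mathcal G \ge \mathcal G_0$.
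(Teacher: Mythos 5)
Your proposal is correct and takes essentially the same approach as the paper: the paper likewise reduces the claim to enlarging an arbitrary $\mathcal G\in\mathbb G$ to an admissible sequence containing it componentwise, and constructs that enlargement by recursion on $n$ starting at $n=N+1$. The details you supply (that violations of (a) and (b) are witnessed by elements of the finitely generated ideals $\mathcal J_{n-1}$, hence force only finitely many additions at strictly lower levels, so the downward closure terminates) are exactly what the paper leaves to the reader with the phrase ``easily constructed by recursion on $n$''.
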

\begin{proof}
Since the set \eqref{geraB} generates $\mathcal B$, it is sufficient to show that for every $\mathcal G=(G_n)_{0 \le n \le N+1}\in \mathbb G$, there exists an admissible $\mathcal H=(H_n)_{0 \le n \le N+1}\in \mathbb G$ such that $G_n$ is contained in $H_n$, for all $n$. The sets $H_n$ are easily constructed by recursion on $n$, starting at $n=N+1$.
\end{proof}

Our next step is to obtain an explicit description of the atoms of $\mathcal B_{\mathcal G}$.
It follows easily from \eqref{eq:partitionGamma0} that, given an admissible $\mathcal G=(G_n)_{0 \le n \le N+1}\in \mathbb G$, the sets:
\[A^{n,\gamma}_{\mathcal G}=V^n_\gamma \setminus V^{\left[0,n\right[}_{\mathcal G}, \ \gamma \in G_n, \ 0 \le n \le N+1\]
form a partition of $\Gamma_0$ into elements of $\mathcal B_{\mathcal G}$. To prove that those are precisely the atoms of $\mathcal B_{\mathcal G}$, we need an auxiliary result.

\begin{lem}\label{Cs}
Let $X$ be a set and $C_1$, $C_2$, \dots, $C_n$ be subsets of $X$ such that, for $1 \le i<j \le n$, either $C_i \cap C_j$ or $C_i \setminus C_j$ is contained in $\bigcup_{k<i}C_k$. Then the subalgebra of $\wp(X)$ generated by $\big\{C_i:i=1,2,\ldots,n\big\}$ coincides with the subalgebra generated by:
\begin{equation}\label{gga}
\big\{\textstyle{C_i \setminus \bigcup_{k<i}C_k}: i=1,2,\ldots,n\big\}.
\end{equation}
\end{lem}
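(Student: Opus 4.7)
\medskip

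\textbf{Plan.} Write $D_i=C_i\setminus\bigcup_{k<i}C_k$ for the generators of the second subalgebra. The inclusion $\langle D_1,\dots,D_n\rangle\subseteq\langle C_1,\dots,C_n\rangle$ is immediate since each $D_i$ is already a boolean combination of the $C_k$'s. For the reverse inclusion, it suffices to show that every $C_i$ can be written as a union of some of the $D_k$'s. Note that the $D_k$'s are pairwise disjoint (for $k<\ell$, $D_\ell\subseteq C_\ell\setminus C_k$ while $D_k\subseteq C_k$), so a union of $D_k$'s is automatically an element of the subalgebra they generate.

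\medskip

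The key observation is to rewrite the two alternatives in the hypothesis in a useful way. For $k<i$, the condition ``$C_k\setminus C_i\subseteq\bigcup_{\ell<k}C_\ell$'' is equivalent to $D_k\subseteq C_i$, because $D_k=C_k\setminus\bigcup_{\ell<k}C_\ell$. So the hypothesis says that for $k<i$, either
\[
\text{(a)}\ C_k\cap C_i\subseteq\bigcup\nolimits_{\ell<k}C_\ell,\qquad\text{or}\qquad\text{(b)}\ D_k\subseteq C_i.
\]

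\medskip

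Now set $T_i=\{k\le i:D_k\subseteq C_i\}$; I claim $C_i=\bigcup_{k\in T_i}D_k$. The $\supseteq$ direction holds by definition of $T_i$. For $\subseteq$, fix $x\in C_i$ and let $k_0$ be the least index (automatically at most $i$) with $x\in C_{k_0}$; then by minimality $x\notin\bigcup_{\ell<k_0}C_\ell$, so $x\in D_{k_0}$. It remains to verify $k_0\in T_i$. If $k_0=i$ this is trivial since $D_i\subseteq C_i$. If $k_0<i$, apply the dichotomy above at the pair $(k_0,i)$: alternative (a) would force $x\in C_{k_0}\cap C_i\subseteq\bigcup_{\ell<k_0}C_\ell$, contradicting $x\in D_{k_0}$, so alternative (b) must hold, giving $D_{k_0}\subseteq C_i$ as desired.

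\medskip

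The only point requiring care is the translation ``(b) $\Leftrightarrow D_k\subseteq C_i$''; once that is in hand, the argument is essentially just picking the smallest $k$ with $x\in C_k$ and using the dichotomy to rule out case (a). I do not expect any other real obstacle.
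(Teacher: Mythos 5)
Your argument is correct. The translation ``$C_k\setminus C_i\subseteq\bigcup_{\ell<k}C_\ell$ iff $D_k\subseteq C_i$'' is right, and the minimal-index argument does show $C_i=\bigcup\{D_k:k\le i,\ D_k\subseteq C_i\}$: case (a) of the dichotomy is incompatible with $x\in D_{k_0}\cap C_i$, so (b) must hold for the least witness $k_0$. (The disjointness of the $D_k$'s is a nice observation but not actually needed, since the generated subalgebra of $\wp(X)$ is closed under finite unions regardless.)

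Your route differs from the paper's. The paper argues by induction on $n$: assuming the statement for $C_1,\dots,C_{n-1}$, it reduces to showing that $C_n$ lies in the subalgebra generated by \eqref{gga}, and for that it shows $C_n\cap C_i$ lies there by a further induction on $i$; the details of both inductions are left to the reader. You instead give a direct, non-inductive proof that each generator $C_i$ is exactly the union of those $D_k$ with $k\le i$ that it contains, by chasing a single point to its first occurrence among the $C_k$'s. Your version is more explicit and arguably more informative --- it exhibits the sets $D_1,\dots,D_n$ (together with the complement of their union) as a partition refining every $C_i$, which is precisely how the lemma gets used in Corollary~\ref{thm:corAGatom} to identify the atoms of $\mathcal B_{\mathcal G}$ --- at the cost of the small bookkeeping step of reformulating the hypothesis as the dichotomy (a)/(b). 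The paper's double induction is shorter to state but hides essentially the same use of the hypothesis inside the step ``$C_n\cap C_i$ belongs to the subalgebra.''
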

\begin{proof}
The proof is done by induction on $n$. Given $n>1$ and assuming that the result holds for $n-1$, we have to check that $C_n$ belongs to the subalgebra generated by \eqref{gga}. To this aim, it suffices to show that $C_n \cap C_i$ belongs to such subalgebra for all $i<n$, which is easily done by induction on $i$.
\end{proof}

\begin{cor}\label{thm:corAGatom}
If $\mathcal G$ is admissible, then the atoms of $\mathcal B_{\mathcal G}$ are:
\[F_{\mathcal G}=\big\{A^{n,\gamma}_{\mathcal G}: \gamma \in G_n, \ 0\le n\le N+1\big\}.\]
\end{cor}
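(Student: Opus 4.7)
The plan is to enumerate the generators $V^n_\gamma$ ($\gamma \in G_n$, $0 \le n \le N+1$) of $\mathcal B_{\mathcal G}$ in an order compatible with the filtration by layers, so that Lemma~\ref{Cs} becomes applicable, and then to identify the sets $C_i\setminus\bigcup_{k<i}C_k$ produced by the lemma with the $A^{n,\gamma}_{\mathcal G}$.

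First, I would list the generators as $C_1,\ldots,C_p$ by taking all $V^0_\gamma$ ($\gamma\in G_0$) first, then all $V^1_\gamma$ ($\gamma\in G_1$), and so on up to $n=N+1$, choosing some arbitrary internal order within each layer $G_n$. To verify the hypothesis of Lemma~\ref{Cs}, take two generators $C_i=V^m_\gamma$ and $C_j=V^n_\delta$ with $i<j$, so that either $m<n$, or $m=n$ and $\gamma$ precedes $\delta$ within the chosen order on $G_m$. In both situations, every element of the form $V^l_\eta$ with $l<m$ has already appeared in the list, which gives the inclusion $V^{\left[0,m\right[}_{\mathcal G}\subset\bigcup_{k<i}C_k$. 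If $m=n$, admissibility condition~(a) yields $C_i\cap C_j=V^n_\gamma\cap V^n_\delta\subset V^{\left[0,n\right[}_{\mathcal G}\subset\bigcup_{k<i}C_k$; if $m<n$, condition~(b) gives that either $C_i\cap C_j$ or $C_i\setminus C_j$ lies in $V^{\left[0,m\right[}_{\mathcal G}\subset\bigcup_{k<i}C_k$. Hence the hypothesis of Lemma~\ref{Cs} holds for this enumeration.

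Next, I would compute the expressions $C_i\setminus\bigcup_{k<i}C_k$ that the lemma produces. Fix $C_i=V^n_\gamma$; the generators preceding $C_i$ consist of every $V^l_\eta$ with $l<n$ (whose union is exactly $V^{\left[0,n\right[}_{\mathcal G}$), together with the elements $V^n_{\gamma'}$ for $\gamma'\in G_n$ that come before $\gamma$. By condition~(a), each such $V^n_{\gamma'}$ satisfies $V^n_\gamma\cap V^n_{\gamma'}\subset V^{\left[0,n\right[}_{\mathcal G}$, so subtracting these additional sets from $V^n_\gamma\setminus V^{\left[0,n\right[}_{\mathcal G}$ has no further effect. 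Therefore $C_i\setminus\bigcup_{k<i}C_k=V^n_\gamma\setminus V^{\left[0,n\right[}_{\mathcal G}=A^{n,\gamma}_{\mathcal G}$.

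Applying Lemma~\ref{Cs}, the subalgebra $\mathcal B_{\mathcal G}$ coincides with the subalgebra of $\wp(\Gamma_0)$ generated by the family $\bigl\{A^{n,\gamma}_{\mathcal G}:\gamma\in G_n,\ 0\le n\le N+1\bigr\}$. Since, as observed just before Lemma~\ref{Cs}, these sets form a partition of $\Gamma_0$, the nonempty ones among them are automatically the atoms of the algebra they generate, and the claim follows. The only delicate point in the argument is the bookkeeping needed to apply the correct admissibility condition, (a) or (b), depending on whether the two generators being compared belong to the same layer or to different layers.
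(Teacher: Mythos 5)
Your proof is correct and follows essentially the same route as the paper: the paper's own argument is precisely an appeal to Lemma~\ref{Cs} together with the observation, made just before that lemma, that the sets $A^{n,\gamma}_{\mathcal G}$ partition $\Gamma_0$, and you have merely supplied the explicit layer-by-layer enumeration and the verification of the hypotheses that the paper leaves to the reader. The only cosmetic point is that admissibility conditions (a) and (b) are stated only for layers $\ge 1$, so the comparisons involving a generator $V^0_\gamma$ should instead be settled by noting that these are pairwise disjoint singletons, which is immediate.
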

\begin{proof}
Follows from Lemma~\ref{Cs} and the fact that if a subalgebra is generated by a partition, then the elements of the partition are the atoms of this subalgebra.
\end{proof}

At this point, we need to estimate the extension constant for pairs of elements of the cofinal set \eqref{escrito}.

\begin{teo}\label{main}
For every admissible sequences $\mathcal G, \mathcal H \in \mathbb G$, we have that the extension constant $c(\mathcal B_{\mathcal G}, \mathcal B_{\mathcal H})$ is less than or equal to $N+\frac32$.
\end{teo}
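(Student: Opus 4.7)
The plan is to decompose the point diagram of $\mathcal B_{\mathcal G}$ and $\mathcal B_{\mathcal H}$ into its rook equivalence classes and to show that each such class $R$ is of type $k$ for some $k\le N+1$ in the sense of Definition~\ref{type}; combining Lemmas~\ref{max} and~\ref{papa} will then yield $c(\mathcal B_{\mathcal G},\mathcal B_{\mathcal H})\le(N+1)+\tfrac12=N+\tfrac32$. For each rook class $R$, I will let $k_R$ denote the largest Cantor--Bendixson level attained by any atom in $R$ and, for each $i$, let $F_1^R(i)$ (respectively $F_2^R(i)$) be the set of atoms $A^{i,\delta}_{\mathcal G}\in R$ with $\delta\in G_i$ (respectively $A^{i,\delta}_{\mathcal H}\in R$ with $\delta\in H_i$); the aim is to verify conditions (a)--(d) of Definition~\ref{type} for this stratification with $k=k_R$.

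The crux of the argument is the following observation: if $a_1=A^{i,\delta}_{\mathcal G}\in R$ has the property that every atom of $\mathcal B_{\mathcal H}$ intersecting it is at level $\le i$, then $\delta\in G_i\cap H_i$ and the whole rook class $R$ is contained in $V^i_\delta$, with every atom of $R$ at level $\le i$. Indeed, the hypothesis translates into $A^{i,\delta}_{\mathcal G}\subset V^{\left[0,i+1\right[}_{\mathcal H}$, and since $A^{i,\delta}_{\mathcal G}=V^i_\delta\setminus V^{\left[0,i\right[}_{\mathcal G}$, one obtains $V^i_\delta\subset V^{\left[0,i\right[}_{\mathcal G}\cup V^{\left[0,i+1\right[}_{\mathcal H}$; passing to $\mathcal A_{i-1}/\mathcal J_{i-1}$, the right-hand side becomes $\bigvee_{\epsilon\in H_i}v^i_\epsilon$ (the strictly lower-level contributions die in the quotient), so $v^i_\delta\le\bigvee_{\epsilon\in H_i}v^i_\epsilon$, and the antichain property forces $\delta\in H_i$. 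Then $V^i_\delta\in\mathcal B_{\mathcal G}\cap\mathcal B_{\mathcal H}$ separates atoms of both algebras, so a rook path starting inside $V^i_\delta$ stays there, giving $R\subset V^i_\delta$; moreover, a level-$j$ atom with $j>i$ sitting in $V^i_\delta$ would force $V^j_\epsilon\in\mathcal J_{j-1}$, contradicting $v^j_\epsilon\ne0$. The same line of reasoning shows that inside $V^i_\delta$ the unique level-$i$ atoms of $\mathcal B_{\mathcal G}$ and $\mathcal B_{\mathcal H}$ are $A^{i,\delta}_{\mathcal G}$ and $A^{i,\delta}_{\mathcal H}$, and that these two intersect, since $v^i_\delta\ne0$ while $V^{\left[0,i\right[}_{\mathcal G}\cup V^{\left[0,i\right[}_{\mathcal H}$ lies in $\mathcal J_{i-1}$.

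With this observation in hand, conditions (c) and (d) are immediate: if some $a_1\in F_1^R(i)$ with $i<k_R$ lacked an $\mathcal H$-atom partner at a level strictly above $i$, the observation would pin $k_R$ down to $i$, a contradiction. For the top-level conditions (a) and (b), when $k_R<N+1$ the observation applied to a max-level atom in $R$ confines $R$ to a single $V^{k_R}_\delta$ with $\delta\in G_{k_R}\cap H_{k_R}$ and ensures that the singleton top atoms intersect; when $k_R=N+1$, one uses instead that $\{V^{N+1}_\gamma:\gamma\in\Gamma_{N+1}\}\subset\mathcal B_{\mathcal G}\cap\mathcal B_{\mathcal H}$ is a partition (by admissibility, $G_{N+1}=H_{N+1}=\Gamma_{N+1}$), so $R$ is contained in a unique $V^{N+1}_\gamma$ whose only level-$(N+1)$ atoms are $A^{N+1,\gamma}_{\mathcal G}$ and $A^{N+1,\gamma}_{\mathcal H}$, and these intersect by the same ideal argument applied at level $N+1$ with $v^{N+1}_\gamma\ne0$ in $\mathcal A_N/\mathcal J_N$. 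Lemma~\ref{papa} then gives $c(R)\le k_R+\tfrac12\le N+\tfrac32$ for every rook class, and Lemma~\ref{max} assembles these into the desired bound. The delicate step, on which the entire argument rests, is the passage in the central observation from the set-theoretic inclusion $V^i_\delta\subset V^{\left[0,i\right[}_{\mathcal G}\cup V^{\left[0,i+1\right[}_{\mathcal H}$ to the antichain relation $v^i_\delta\le\bigvee_{\epsilon\in H_i}v^i_\epsilon$ in $\mathcal A_{i-1}/\mathcal J_{i-1}$, which is what forces $\delta\in H_i$ and makes the whole Cantor--Bendixson level stratification work.
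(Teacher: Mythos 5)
Your argument is correct and reaches the bound $N+\tfrac32$ by a route that is organized quite differently from the paper's. The paper first forms the sequence $\mathcal G\cap\mathcal H$, proves by a double induction that it still witnesses the antichain conditions (the lemma preceding Corollary~\ref{atomsinter}), uses this to partition $\Gamma_0$ into the sets $A^{n,\gamma}_{\mathcal G\cap\mathcal H}$ with $\gamma\in G_n\cap H_n$, computes the induced blocks of atoms explicitly in Lemma~\ref{quebraatom}, and only then checks that each block is of type $n$ (Lemma~\ref{borboleta}). You instead decompose directly into rook classes --- the finest decomposition available, which a posteriori coincides with the paper's, since type-$k$ diagrams are indecomposable --- and extract all the needed structure from a single observation: if every $\mathcal H$-atom meeting $A^{i,\delta}_{\mathcal G}$ lives at level $\le i$, then $\delta\in H_i$ and the whole rook class collapses into $V^i_\delta$ at levels $\le i$. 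The delicate step you single out is indeed sound: from $V^i_\delta\subset V^{\left[0,i\right[}_{\mathcal G}\cup V^{\left[0,i+1\right[}_{\mathcal H}$ one gets $v^i_\delta\le\bigvee_{\epsilon\in H_i}v^i_\epsilon$ in $\mathcal A_{i-1}/\mathcal J_{i-1}$ because the lower-level terms lie in $\mathcal J_{i-1}$ and the ideal is downward closed in $\mathcal A_{i-1}$, and then $v^i_\delta\ne0$ together with the antichain property forces $\delta\in H_i$; this is the same ideal-membership mechanism the paper uses, but packaged once rather than spread over three lemmas. What your approach buys is economy (no analogue of the witnessing lemma for $\mathcal G\cap\mathcal H$, no explicit description of the blocks $F^{n,\gamma}_{\mathcal G}(m)$); what the paper's buys is the explicit identification of the indecomposable pieces with the $A^{n,\gamma}_{\mathcal G\cap\mathcal H}$, which it reuses in the closing remark on separable $K$. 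Two cosmetic points: the sets $V^{N+1}_\gamma$ cover $\Gamma_0$ by \eqref{eq:partitionGamma0} but need not be pairwise disjoint unless the representatives are so chosen --- though for your purposes it is enough that each lies in $\mathcal B_{\mathcal G}\cap\mathcal B_{\mathcal H}$ and hence contains or misses each rook class; and your separate treatment of $k_R=N+1$ is unnecessary, since the central observation applies verbatim at $i=N+1$, where $\delta\in H_{N+1}=\Gamma_{N+1}$ is automatic from admissibility.
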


We can now use Theorem~\ref{main} to conclude the proof of Theorem~\ref{teomainLindo} and then the reminder of the section will be dedicated to the proof of Theorem~\ref{main}.

\begin{proof}[Proof of Theorem~\ref{teomainLindo}]
Since $\clop(K)$ is isomorphic to $\mathcal B$ and the height $M$ of $K$ is equal to $N+2$, the result follows directly from Proposition~\ref{cofinalprova} and Theorem~\ref{main}.
\end{proof}

From now on, we consider fixed admissible sequences $\mathcal G=(G_n)_{0 \le n \le N+1}$ and $\mathcal H=(H_n)_{0 \le n \le N+1}$ in $\mathbb G$ and we set:
\[\mathcal G \cap \mathcal H=(G_n \cap H_n)_{0 \le n \le N+1}.\]
Recall from Section~\ref{sec:pointdiagram} that the pair of subalgebras $\mathcal B_{\mathcal G}$, $\mathcal B_{\mathcal H}$ defines a point diagram $(X;F_{\mathcal G},F_{\mathcal H})$.
Our strategy is to use this point diagram to estimate the extension constant $c(\mathcal B_{\mathcal G}, \mathcal B_{\mathcal H})$. To this aim, we will exhibit a partition of $\Gamma_0$ into elements of $\mathcal B_{\mathcal G} \cap \mathcal B_{\mathcal H}$ which will yield a decomposition of $(X;F_{\mathcal G},F_{\mathcal H})$.

Next lemma states that $\mathcal G \cap \mathcal H$ witness the fact that $(V^n_\gamma)_{\gamma \in G_n \cap H_n}$ is an antichain modulo $\mathcal J_{n-1}$.

\begin{lem}
For any $n=1, \ldots, N+1$ and any $\gamma, \delta \in G_n \cap H_n$ with $\gamma \ne \delta$, we have $V^n_\gamma \cap V^n_\delta \subset V^{\left[0,n\right[}_{\mathcal G \cap \mathcal H}$.
\end{lem}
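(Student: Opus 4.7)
The plan is to proceed by induction on $n \ge 1$. The base case $n=1$ is immediate: admissibility~(a) applied to $\mathcal G$ and $\mathcal H$ separately gives $V^1_\gamma \cap V^1_\delta \subset V^{[0,1[}_{\mathcal G}=G_0$ and $V^1_\gamma \cap V^1_\delta \subset V^{[0,1[}_{\mathcal H}=H_0$, so the intersection lies in $G_0 \cap H_0 = V^{[0,1[}_{\mathcal G\cap\mathcal H}$.

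For the inductive step, I would fix $x \in V^n_\gamma \cap V^n_\delta$, and, using Corollary~\ref{thm:corAGatom}, let $(m,\alpha)$ be the unique pair with $m<n$, $\alpha \in G_m$, and $x \in A^{m,\alpha}_{\mathcal G}$, and let $(m',\beta)$ be its $\mathcal H$-side analogue. Admissibility~(b) of $\mathcal G$ applied to $(\alpha,\gamma)$ and $(\alpha,\delta)$ is forced into the ``setminus'' alternative by the presence of $x$ outside $V^{[0,m[}_{\mathcal G}$, yielding $A^{m,\alpha}_{\mathcal G} \subset V^n_\gamma \cap V^n_\delta$; the symmetric argument for $\mathcal H$ gives $A^{m',\beta}_{\mathcal H} \subset V^n_\gamma \cap V^n_\delta$. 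If $\alpha \in H_m$, we are done with witness $\alpha \in G_m \cap H_m$, since then $x \in V^m_\alpha \subset V^{[0,n[}_{\mathcal G\cap\mathcal H}$; similarly if $\beta \in G_{m'}$.

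The remaining case is $\alpha \in G_m \setminus H_m$ and $\beta \in H_{m'} \setminus G_{m'}$. Here I would exploit the inclusion $A^{m,\alpha}_{\mathcal G} \subset V^{[0,n[}_{\mathcal H}$: passing to the quotient $\mathcal A_{m-1}/\mathcal J_{m-1}$ and using that the antichain property of $(V^m_\eta)_{\eta \in \Gamma_m}$ modulo $\mathcal J_{m-1}$ kills the level-$m$ contributions (because $\alpha \notin H_m$), one obtains
\[
[V^m_\alpha] \le \bigvee_{m < m'' < n,\,\beta'' \in H_{m''}} [V^{m''}_{\beta''}].
\]
Combining this with admissibility~(b) of $\mathcal H$ applied to each relevant $(\beta'',\gamma)$ and $(\beta'',\delta)$, plus the inductive hypothesis at levels $m'' < n$, should produce a witness $\sigma \in G_{m^*} \cap H_{m^*}$ with $x \in V^{m^*}_\sigma$ and $m^*<n$, completing the induction. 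The main obstacle is precisely this bridging step: admissibility~(b) only constrains pairs drawn from a single one of $\mathcal G$ or $\mathcal H$, so passing between the two sides must be routed via the inductive hypothesis at lower levels together with careful bookkeeping of how $\mathcal G$- and $\mathcal H$-atoms sit inside the $V^n_\gamma$'s.
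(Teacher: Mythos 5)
Your reductions are sound up to the last case: the base case is fine, and in the inductive step the passage to the $\mathcal B_{\mathcal G}$-atom $A^{m,\alpha}_{\mathcal G}$ containing $x$, the use of admissibility~(b) to get $A^{m,\alpha}_{\mathcal G}\subset V^n_\gamma\cap V^n_\delta$, and the two easy branches ($\alpha\in H_m$ or $\beta\in G_{m'}$) are all correct. But the remaining case --- $\alpha\in G_m\setminus H_m$ and $\beta\in H_{m'}\setminus G_{m'}$ --- is where the entire content of the lemma lies, and your sketch does not close it. The inductive hypothesis you invoke (the lemma at levels $m''<n$) only concerns pairs of distinct indices \emph{both} lying in $G_{m''}\cap H_{m''}$, whereas the indices you need to control here ($\alpha$, $\beta$, and the $\beta''$ in your join) are exactly the ones \emph{not} in the intersection, so that hypothesis never applies to them. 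Moreover, the inequality $[V^m_\alpha]\le\bigvee[V^{m''}_{\beta''}]$ holds only modulo $\mathcal J_{m-1}$ and hence says nothing about where the particular point $x$ lands, and admissibility~(b) of $\mathcal H$ compares the $V^{m''}_{\beta''}$ with $V^n_\gamma$ and $V^n_\delta$ but never with any $\mathcal G$-side set. Nothing in the outline actually produces a witness $\sigma\in G_{m^*}\cap H_{m^*}$ with $x\in V^{m^*}_\sigma$.

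The paper closes this gap with two ingredients absent from your proposal. First, it strengthens the statement being induced on: for fixed $\gamma,\delta$ it proves, by \emph{downward} recursion on an auxiliary depth $m$ from $n$ to $0$, the simultaneous pair of inclusions \eqref{fome}--\eqref{fome2}, which mix the low $\mathcal G$-levels (resp.\ $\mathcal H$-levels) with $V^{\left[m,n\right[}_{\mathcal G\cap\mathcal H}$; the lemma is the case $m=0$. Second, the step is carried out at the level of whole atoms rather than points: for $\epsilon\in G_m\setminus H_m$ with $A^{m,\epsilon}_{\mathcal G}\subset V^n_\gamma\cap V^n_\delta$, the set $V^{\left]m,n\right[}_{\mathcal G\cap\mathcal H}$ \emph{belongs to} $\mathcal B_{\mathcal G}$, so the atom $A^{m,\epsilon}_{\mathcal G}$ is either contained in it (and we are done) or disjoint from it; in the latter case the $\mathcal H$-side hypothesis \eqref{semfome} forces $A^{m,\epsilon}_{\mathcal G}\subset V^{[0,m]}_{\mathcal H}$, which combined with the antichain property at level $m$ (using $\epsilon\notin H_m$) places $A^{m,\epsilon}_{\mathcal G}$ in $\mathcal J_{m-1}$, a contradiction. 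This atom-versus-ideal dichotomy is precisely the bridge between the $\mathcal G$- and $\mathcal H$-sides that you identify as the obstacle; to salvage your induction on $n$ you would in effect have to reprove this two-sided statement, so you should adopt the depth recursion directly.
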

\begin{proof}
Since $\mathcal G$ and $\mathcal H$ are admissible, we have:
\begin{equation}\label{minhavo}
V^n_\gamma \cap V^n_\delta \subset V^{\left[0,n\right[}_{\mathcal G}\quad\text{and}\quad V^n_\gamma \cap V^n_\delta \subset V^{\left[0,n\right[}_{\mathcal H}.
\end{equation}
To prove the lemma, we will show by induction on $m$ that:
\begin{align}
\label{fome}&V^n_\gamma \cap V^n_\delta \subset V^{\left[0,m\right[}_{\mathcal G} \cup V_{\mathcal G \cap \mathcal H}^{\left[m,n\right[}\quad\text{and}\\
\label{fome2}&V^n_\gamma \cap V^n_\delta \subset V^{\left[0,m\right[}_{\mathcal H} \cup V_{\mathcal G \cap \mathcal H}^{\left[m,n\right[},
\end{align}
for $m=0,\ldots,n$. The case $m=n$ is just \eqref{minhavo}. Fix $m<n$ and assume that:
\begin{align}
&V^n_\gamma \cap V^n_\delta \subset V^{[0,m]}_{\mathcal G} \cup V_{\mathcal G \cap \mathcal H}^{\left]m,n\right[}\quad\text{and}\\
\label{semfome}&V^n_\gamma \cap V^n_\delta \subset V^{[0,m]}_{\mathcal H} \cup V_{\mathcal G \cap \mathcal H}^{\left]m,n\right[}.
\end{align}
Let us prove \eqref{fome}. The proof of \eqref{fome2} is analogous. It is sufficient to check that:
\begin{equation}\label{salgadinho}
V^n_\gamma\cap V^n_\delta\cap (V^m_\epsilon\setminus V_{\mathcal G}^{\left[0,m\right[})\subset V_{\mathcal G\cap \mathcal H}^{\left[m,n\right[},
\end{equation}
for all $\epsilon \in G_m \setminus H_m$. Since $\mathcal G$ is admissible, we have that $A_{\mathcal G}^{m,\epsilon}=V^m_\epsilon \setminus V_{\mathcal G}^{\left[0,m\right[}$ is either contained in $V^n_\gamma$ or is disjoint from $V_\gamma^n$. Similarly, $A_{\mathcal G}^{m,\epsilon}$ is either contained in $V^n_\delta$ or is disjoint from $V_\delta^n$. If $A_{\mathcal G}^{m,\epsilon}$ is disjoint from either $V^n_\gamma$ or $V^n_\delta$, then \eqref{salgadinho} holds trivially. Otherwise, it follows from
\eqref{semfome} that:
\begin{equation}\label{lucifer}
A_{\mathcal G}^{m,\epsilon}\subset V^{[0,m]}_{\mathcal H} \cup V_{\mathcal G \cap \mathcal H}^{\left]m,n\right[}.
\end{equation}
Since $A_{\mathcal G}^{m,\epsilon}$ is an atom of $\mathcal B_{\mathcal G}$ (Corollary~\ref{thm:corAGatom}) and $V_{\mathcal G \cap \mathcal H}^{\left]m,n\right[}\in\mathcal B_{\mathcal G}$, it follows that $A_{\mathcal G}^{m,\epsilon}$ is either contained in $V_{\mathcal G \cap \mathcal H}^{\left]m,n\right[}$ or is disjoint
from $V_{\mathcal G \cap \mathcal H}^{\left]m,n\right[}$. If it is contained, then \eqref{salgadinho} follows. If $A_{\mathcal G}^{m,\epsilon}$ is disjoint from $V_{\mathcal G \cap \mathcal H}^{\left]m,n\right[}$ then, by \eqref{lucifer}, $A_{\mathcal G}^{m,\epsilon}$ is contained in $V^{[0,m]}_{\mathcal H}$. Therefore:
\begin{equation}\label{eq:contradiction}
A_{\mathcal G}^{m,\epsilon}=A_{\mathcal G}^{m,\epsilon}\cap V^{[0,m]}_{\mathcal H}\subset V^{\left[0,m\right[}_{\mathcal H}\cup(V^m_\epsilon\cap V^{\{m\}}_{\mathcal H}).
\end{equation}
Since $\epsilon\not\in H_m$, we have that $V^m_\epsilon\cap V^{\{m\}}_{\mathcal H}\in\mathcal J_{m-1}$ and since $V^{\left[0,m\right[}_{\mathcal H}$ is also in $\mathcal J_{m-1}$, we obtain that \eqref{eq:contradiction} contradicts the fact that $A_{\mathcal G}^{m,\epsilon}$ is not in $\mathcal J_{m-1}$.
\end{proof}

\begin{cor}\label{atomsinter}
The sets:
\[A_{\mathcal G \cap \mathcal H}^{n,\gamma}=V^n_\gamma \setminus V^{\left[0,n\right[}_{\mathcal G \cap \mathcal H}, \ \gamma \in G_n \cap H_n, \ 0 \le n \le N+1\]
form a partition of $\Gamma_0$ into elements of $\mathcal B_{\mathcal G} \cap \mathcal B_{\mathcal H}$.\qed
\end{cor}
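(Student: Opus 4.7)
The plan is to adapt the reasoning used (in the paragraph preceding Lemma~\ref{Cs}) to show that the $A^{n,\gamma}_{\mathcal G}$ partition $\Gamma_0$ for an admissible $\mathcal G$. The preceding lemma already plays the role of admissibility condition~(a) for the sequence $\mathcal G\cap\mathcal H$ (which may itself fail to be admissible, since condition~(b) need not pass to the intersection), and this is the only nontrivial ingredient; everything else is direct bookkeeping. I need to verify (i) that the sets $A^{n,\gamma}_{\mathcal G\cap\mathcal H}$ partition $\Gamma_0$, and (ii) that each lies in $\mathcal B_{\mathcal G}\cap\mathcal B_{\mathcal H}$.

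Step (ii) is essentially immediate. If $\gamma\in G_n\cap H_n$, then $V^n_\gamma$ belongs to $\mathcal B_{\mathcal G}$ because $\gamma\in G_n$, and likewise $V^m_\delta\in\mathcal B_{\mathcal G}$ for every $m<n$ and every $\delta\in G_m\cap H_m\subset G_m$; consequently $V^{[0,n[}_{\mathcal G\cap\mathcal H}\in\mathcal B_{\mathcal G}$, and therefore $A^{n,\gamma}_{\mathcal G\cap\mathcal H}=V^n_\gamma\setminus V^{[0,n[}_{\mathcal G\cap\mathcal H}\in\mathcal B_{\mathcal G}$. The symmetric argument places it in $\mathcal B_{\mathcal H}$.

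For step (i), covering uses admissibility condition~(c) of both $\mathcal G$ and $\mathcal H$, which gives $G_{N+1}\cap H_{N+1}=\Gamma_{N+1}$; combined with \eqref{eq:partitionGamma0}, every $x\in\Gamma_0$ belongs to some $V^{N+1}_\gamma$ with $\gamma\in G_{N+1}\cap H_{N+1}$. Taking $n$ to be the smallest index for which $x$ lies in $V^n_\gamma$ for some $\gamma\in G_n\cap H_n$ yields $x\in A^{n,\gamma}_{\mathcal G\cap\mathcal H}$. For disjointness, fix distinct pairs $(n,\gamma)$ and $(m,\delta)$. If $n<m$, then $V^n_\gamma\subset V^{[0,m[}_{\mathcal G\cap\mathcal H}$, so $A^{m,\delta}_{\mathcal G\cap\mathcal H}$ is disjoint from $V^n_\gamma$ and a fortiori from $A^{n,\gamma}_{\mathcal G\cap\mathcal H}$. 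If $n=m$ and $\gamma\ne\delta$, the preceding lemma supplies $V^n_\gamma\cap V^n_\delta\subset V^{[0,n[}_{\mathcal G\cap\mathcal H}$, which again forces the two sets to be disjoint.

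I do not foresee any serious obstacle: the delicate induction on $m$ comparing $V^{[0,m[}_{\mathcal G}$ with $V^{[0,m[}_{\mathcal H}$ was the content of the preceding lemma, and once that is in hand, the corollary is a mechanical unpacking whose only subtle point is remembering that $\mathcal G\cap\mathcal H$ need not be admissible, so the covering step must appeal to admissibility of $\mathcal G$ and $\mathcal H$ separately.
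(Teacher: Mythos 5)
Your proposal is correct and matches the intended argument: the paper leaves the corollary as immediate (\verb|\qed| in the statement) precisely because, once the preceding lemma supplies $V^n_\gamma\cap V^n_\delta\subset V^{\left[0,n\right[}_{\mathcal G\cap\mathcal H}$, the partition claim follows by the same bookkeeping used for the $A^{n,\gamma}_{\mathcal G}$ before Lemma~\ref{Cs}, and membership in $\mathcal B_{\mathcal G}\cap\mathcal B_{\mathcal H}$ is clear since each $A^{n,\gamma}_{\mathcal G\cap\mathcal H}$ is a Boolean combination of generators common to both algebras. Your explicit treatment of covering via condition~(c) and \eqref{eq:partitionGamma0}, and of disjointness split into the cases $n<m$ and $n=m$, is exactly the right unpacking.
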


The partition of $\Gamma_0$ given in Corollary~\ref{atomsinter} induces a partition of $F_{\mathcal G}$:
\[\big\{F_{\mathcal G}^{n,\gamma}: \gamma \in G_n \cap H_n, \ 0 \le n \le N+1\big\},\]
where $F_{\mathcal G}^{n,\gamma}$ is the collection of atoms of $\mathcal B_{\mathcal G}$ contained in $A_{\mathcal G \cap \mathcal H}^{n,\gamma}$. Similarly, we obtain a partition of $F_{\mathcal H}$ into sets $F_{\mathcal H}^{n,\gamma}$. Next lemma gives an explicit description of $F_{\mathcal G}^{n,\gamma}$.

\begin{lem}\label{quebraatom}
For all $n=0,1,\ldots,N+1$ and all $\gamma \in G_n \cap H_n$, we have that $F_{\mathcal G}^{n,\gamma}=\bigcup_{m=0}^n F_{\mathcal G}^{n,\gamma}(m)$, where $F_{\mathcal G}^{n,\gamma}(n)=\{A_{\mathcal G}^{n,\gamma}\}$ and, for $m<n$, the elements of $F_{\mathcal G}^{n,\gamma}(m)$ are the atoms $A_{\mathcal G}^{m,\delta}$ such that $\delta \in G_m \setminus H_m$, $A_{\mathcal G}^{m,\delta} \subset V^n_\gamma$ and $A_{\mathcal G}^{m,\delta} \cap V^{\left]m,n\right[}_{\mathcal G \cap \mathcal H}=\emptyset$.
\end{lem}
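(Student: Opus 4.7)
The starting observation is that, by Corollary~\ref{atomsinter}, $A^{n,\gamma}_{\mathcal{G}\cap\mathcal{H}}$ lies in $\mathcal{B}_{\mathcal{G}}\cap\mathcal{B}_{\mathcal{H}}$, and in particular in $\mathcal{B}_{\mathcal{G}}$. Therefore each atom $A^{m,\delta}_{\mathcal{G}}$ of $\mathcal{B}_{\mathcal{G}}$ (given by Corollary~\ref{thm:corAGatom}, with $\delta\in G_m$ and $0\le m\le N+1$) is either contained in $A^{n,\gamma}_{\mathcal{G}\cap\mathcal{H}}$ or disjoint from it. The task therefore reduces to deciding for which pairs $(m,\delta)$ both of the conditions
\[
A^{m,\delta}_{\mathcal{G}}\subset V^n_\gamma\quad\text{and}\quad A^{m,\delta}_{\mathcal{G}}\cap V^{\left[0,n\right[}_{\mathcal{G}\cap\mathcal{H}}=\emptyset
\]
hold. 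I would split the analysis into the three cases $m>n$, $m=n$, and $m<n$.

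For $m>n$: since $\gamma\in G_n$ and $n<m$, $V^n_\gamma\subset V^{\left[0,m\right[}_{\mathcal{G}}$, while $A^{m,\delta}_{\mathcal{G}}=V^m_\delta\setminus V^{\left[0,m\right[}_{\mathcal{G}}$ is disjoint from $V^{\left[0,m\right[}_{\mathcal{G}}$, so no such atom contributes. For $m=n$: if $\delta=\gamma$, the first condition is trivial and the second follows from $V^{\left[0,n\right[}_{\mathcal{G}\cap\mathcal{H}}\subset V^{\left[0,n\right[}_{\mathcal{G}}$, giving $F^{n,\gamma}_{\mathcal{G}}(n)=\{A^{n,\gamma}_{\mathcal{G}}\}$; if $\delta\ne\gamma$, admissibility~(a) of $\mathcal{G}$ yields $V^n_\gamma\cap V^n_\delta\subset V^{\left[0,n\right[}_{\mathcal{G}}$, forcing $A^{n,\delta}_{\mathcal{G}}$ to be disjoint from $V^n_\gamma$.

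For $m<n$, I would unwind $V^{\left[0,n\right[}_{\mathcal{G}\cap\mathcal{H}}$ as the union, over $k<n$ and $\epsilon\in G_k\cap H_k$, of $V^k_\epsilon$, and break the disjointness requirement according to whether $k<m$, $k=m$, or $m<k<n$. The range $k<m$ is automatic because $V^k_\epsilon\subset V^{\left[0,m\right[}_{\mathcal{G}}$. The range $k=m$ is the crux: if $\delta\in H_m$, then $\epsilon=\delta$ is permitted and $A^{m,\delta}_{\mathcal{G}}\subset V^m_\delta$ meets $V^{\left[0,n\right[}_{\mathcal{G}\cap\mathcal{H}}$, ruling this case out; if $\delta\in G_m\setminus H_m$, then every $\epsilon\in G_m\cap H_m$ differs from $\delta$ and admissibility~(a) gives $V^m_\delta\cap V^m_\epsilon\subset V^{\left[0,m\right[}_{\mathcal{G}}$, so disjointness is automatic. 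This is precisely what singles out $\delta\in G_m\setminus H_m$. The remaining range $m<k<n$ translates directly into the stated condition $A^{m,\delta}_{\mathcal{G}}\cap V^{\left]m,n\right[}_{\mathcal{G}\cap\mathcal{H}}=\emptyset$, while the containment $A^{m,\delta}_{\mathcal{G}}\subset V^n_\gamma$ is kept as the independent requirement it obviously is.

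The main obstacle is the bookkeeping in the $m<n$ case, specifically the $k=m$ subcase, where the condition $\delta\in G_m\setminus H_m$ is introduced. The only inputs required beyond Corollary~\ref{thm:corAGatom} are admissibility~(a) of $\mathcal{G}$ and Corollary~\ref{atomsinter}; admissibility~(b) does not intervene here directly, having already been absorbed into Corollary~\ref{atomsinter} via the previous lemma.
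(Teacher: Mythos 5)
Your proposal is correct and follows essentially the same route as the paper's proof: reduce membership of an atom $A_{\mathcal G}^{m,\delta}$ in $F_{\mathcal G}^{n,\gamma}$ to the two conditions $A_{\mathcal G}^{m,\delta}\subset V^n_\gamma$ and $A_{\mathcal G}^{m,\delta}\cap V^{\left[0,n\right[}_{\mathcal G\cap\mathcal H}=\emptyset$, then run the case analysis on $m>n$, $m=n$, $m<n$. Your level-by-level unwinding of $V^{\left[0,n\right[}_{\mathcal G\cap\mathcal H}$ in the case $m<n$ just makes explicit the step the paper leaves as a one-line ``if and only if''.
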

\begin{proof}
Fix $n$ and $m$ with $0\le n,m\le N+1$, $\gamma \in G_n \cap H_n$ and $\delta \in G_m$. Since both $V^n_\gamma$ and
$V^{\left[0,n\right[}_{\mathcal G \cap \mathcal H}$ belong to $\mathcal B_{\mathcal G}$, we have that the atom $A_{\mathcal G}^{m,\delta}$
of $\mathcal B_{\mathcal G}$ is contained in $A_{\mathcal G \cap \mathcal H}^{n,\gamma}=V^n_\gamma\setminus V^{\left[0,n\right[}_{\mathcal G \cap \mathcal H}$ if and only if it is contained in $V^n_\gamma$ and it is disjoint from $V^{\left[0,n\right[}_{\mathcal G \cap \mathcal H}$.
If either $m>n$ or if $m=n$ and $\delta\ne\gamma$, then $A_{\mathcal G}^{m,\delta}$ is disjoint from $V^n_\gamma$. Clearly,
for $m=n$ and $\delta=\gamma$, the set $A_{\mathcal G}^{m,\delta}$ is contained in $A_{\mathcal G \cap \mathcal H}^{n,\gamma}$.
Finally, for $m<n$, we have that $A_{\mathcal G}^{m,\delta}$ is disjoint from $V^{\left[0,n\right[}_{\mathcal G \cap \mathcal H}$ if and only
if $\delta\notin H_m$ and $A_{\mathcal G}^{m,\delta} \cap V^{\left]m,n\right[}_{\mathcal G \cap \mathcal H}=\emptyset$.
\end{proof}

Obviously, we have $F_{\mathcal H}^{n,\gamma}=\bigcup_{m=0}^n F_{\mathcal H}^{n,\gamma}(m)$, with $F_{\mathcal H}^{n,\gamma}(m)$ defined as in the statement of Lemma~\ref{quebraatom}, interchanging the roles of $\mathcal G$ and $\mathcal H$.

\medskip

The partitions of $F_{\mathcal G}$ and $F_{\mathcal H}$ obtained above yield a decomposition of the point diagram $(X; F_{\mathcal G},F_{\mathcal H})$ of $\mathcal B_{\mathcal G}$ and $\mathcal B_{\mathcal H}$ by setting:
\[X^{n,\gamma}=X \cap (F_{\mathcal G}^{n,\gamma} \times F_{\mathcal H}^{n,\gamma}), \quad \gamma \in G_n \cap H_n, \ 0 \le n \le N+1.\]

\begin{lem}\label{borboleta}
For all $n=0, \ldots, N+1$ and all $\gamma \in G_n \cap H_n$, the point diagram $(X^{n,\gamma}; F_{\mathcal G}^{n,\gamma}, F_{\mathcal H}^{n,\gamma})$ is of type $n$.
\end{lem}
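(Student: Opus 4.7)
The plan is to exhibit the partitions $F_{\mathcal G}^{n,\gamma}=\bigcup_{m=0}^n F_{\mathcal G}^{n,\gamma}(m)$ and $F_{\mathcal H}^{n,\gamma}=\bigcup_{m=0}^n F_{\mathcal H}^{n,\gamma}(m)$ provided by Lemma~\ref{quebraatom} as the type-$n$ structure on $(X^{n,\gamma};F_{\mathcal G}^{n,\gamma},F_{\mathcal H}^{n,\gamma})$, and then verify conditions (a)--(d) of Definition~\ref{type}. Condition (a) reads off directly from Lemma~\ref{quebraatom}, since $F_{\mathcal G}^{n,\gamma}(n)=\{A_{\mathcal G}^{n,\gamma}\}$ and analogously for $\mathcal H$.

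For (b), I would just compute
\[A_{\mathcal G}^{n,\gamma}\cap A_{\mathcal H}^{n,\gamma}=V^n_\gamma\setminus\bigl(V^{[0,n[}_{\mathcal G}\cup V^{[0,n[}_{\mathcal H}\bigr),\]
and observe that the subtracted set, being a finite union of $V^\ell_\sigma$ with $\ell<n$, lies in $\mathcal J_{n-1}$, whereas $V^n_\gamma$ represents the nonzero class $v^n_\gamma$ in $\mathcal A_{n-1}/\mathcal J_{n-1}$. Thus the difference is nonempty and $(A_{\mathcal G}^{n,\gamma},A_{\mathcal H}^{n,\gamma})\in X^{n,\gamma}$.

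The real content is (c); once it is established, (d) follows by swapping the roles of $\mathcal G$ and $\mathcal H$. Fix $i<n$ and $A_{\mathcal G}^{i,\delta}\in F_{\mathcal G}^{n,\gamma}(i)$, so that $\delta\in G_i\setminus H_i$ by Lemma~\ref{quebraatom}. Because $A_{\mathcal G}^{i,\delta}\subset A_{\mathcal G\cap\mathcal H}^{n,\gamma}\in\mathcal B_{\mathcal H}$, every $\mathcal B_{\mathcal H}$-atom meeting $A_{\mathcal G}^{i,\delta}$ automatically lies in $F_{\mathcal H}^{n,\gamma}$. I would argue by contradiction, assuming that every such atom lies in some $F_{\mathcal H}^{n,\gamma}(j)$ with $j\le i$. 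Then $A_{\mathcal G}^{i,\delta}\subset V^{[0,i]}_{\mathcal H}$, and combining this with $A_{\mathcal G}^{i,\delta}=V^i_\delta\setminus V^{[0,i[}_{\mathcal G}$ yields
\[V^i_\delta\subset V^{[0,i[}_{\mathcal G}\cup V^{[0,i[}_{\mathcal H}\cup\bigcup_{\sigma\in H_i}V^i_\sigma.\]
Passing to classes in $\mathcal A_{i-1}/\mathcal J_{i-1}$ kills the first two summands, giving $v^i_\delta\le\bigvee_{\sigma\in H_i}v^i_\sigma$. Since $\delta\notin H_i$, the antichain property of $(v^i_\sigma)_{\sigma\in\Gamma_i}$ forces $v^i_\delta\wedge v^i_\sigma=0$ for every $\sigma\in H_i$, and hence $v^i_\delta=0$, contradicting the properness of the filter whose basis is $v^i_\delta$.

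The main obstacle is precisely this last step: converting the purely set-theoretic assumption that the low-level $\mathcal B_{\mathcal H}$-atoms cover $A_{\mathcal G}^{i,\delta}$ into a covering relation in the quotient $\mathcal A_{i-1}/\mathcal J_{i-1}$, so that the antichain property of the $v$'s at level $i$ can be invoked to reach a contradiction.
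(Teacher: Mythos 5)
Your proof is correct and follows essentially the same route as the paper's: you use the partitions from Lemma~\ref{quebraatom}, verify (a)--(d) of Definition~\ref{type}, and derive the same contradiction for (b) and (c) from the fact that $A_{\mathcal G}^{n,\gamma}$ and $A_{\mathcal G}^{i,\delta}$ do not lie in $\mathcal J_{n-1}$ and $\mathcal J_{i-1}$ respectively. The only cosmetic difference is that you phrase the final step of (c) in the quotient $\mathcal A_{i-1}/\mathcal J_{i-1}$ via the antichain property of $(v^i_\sigma)_\sigma$, whereas the paper argues directly with ideal membership, using $V^i_\delta\cap V^i_\epsilon\in\mathcal J_{i-1}$ for $\delta\ne\epsilon$; these are the same argument.
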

\begin{proof}
Let $n$ with $0\le n \le N+1$ and $\gamma \in G_n \cap H_n$ be fixed. To prove that $(X^{n,\gamma}; F_{\mathcal G}^{n,\gamma}, F_{\mathcal H}^{n,\gamma})$ is of type $n$, we consider the disjoint unions:
\[F_{\mathcal G}^{n,\gamma}=\bigcup_{m=0}^{n}F_{\mathcal G}^{n,\gamma}(m) \quad \text{and} \quad F_{\mathcal H}^{n,\gamma}=\bigcup_{m=0}^{n}F_{\mathcal H}^{n,\gamma}(m).\]
It is clear that condition (a) of Definition~\ref{type} is satisfied. To prove condition (b), we have to check that $A_{\mathcal G}^{n,\gamma}$ intersects $A_{\mathcal H}^{n,\gamma}$. If $A_{\mathcal G}^{n,\gamma} \cap A_{\mathcal H}^{n,\gamma}=\emptyset$, then $A_{\mathcal G}^{n,\gamma}$ is contained in:
\begin{equation}\label{uniaoGrande}
\bigcup \big\{A:\textstyle{ A \in \bigcup_{m<n}F_{\mathcal H}^{n,\gamma}(m)}\big\},
\end{equation}
which yields a contradiction since the union \eqref{uniaoGrande} belongs to the ideal $\mathcal J_{n-1}$ and $A_{\mathcal G}^{n,\gamma}$ does not. Now, let us prove condition (c). Fix $m<n$ and an element $A_{\mathcal G}^{m,\delta}$ of $F_{\mathcal G}^{n,\gamma}(m)$. We need to show that $A_{\mathcal G}^{m,\delta}$ intersects the union:
\[\bigcup \big\{A:\textstyle{ A \in \bigcup_{i>m}F_{\mathcal H}^{n,\gamma}(i)}\big\}.\]
Assuming that this does not hold, we have that $A_{\mathcal G}^{m,\delta}$ is contained in:
\[\bigcup \big\{A:\textstyle{ A \in \bigcup_{i \le m}F_{\mathcal H}^{n,\gamma}(i)}\big\}.\]
This implies that $A_{\mathcal G}^{m,\delta}$ is contained in the union of:
\begin{equation}\label{inferno1}
\bigcup \big\{A:\textstyle{ A \in \bigcup_{i< m}F_{\mathcal H}^{n,\gamma}(i)}\big\}
\end{equation}
with
\begin{equation}\label{inferno2}
\bigcup\big\{A_{\mathcal G}^{m,\delta} \cap A_{\mathcal H}^{m,\epsilon}: \epsilon \in H_m \setminus G_m\big\}.
\end{equation}
Clearly \eqref{inferno1} belongs to $\mathcal J_{m-1}$. Moreover, since $\delta$ is in $G_m \setminus H_m$ and thus $\delta\ne\epsilon$ for all $\epsilon\in H_m\setminus G_m$, we have that also \eqref{inferno2} belongs to $\mathcal J_{m-1}$. The fact that $A_{\mathcal G}^{m,\delta}$ is contained
in the union of \eqref{inferno1} with \eqref{inferno2} therefore contradicts the fact that $A_{\mathcal G}^{m,\delta}$ is not in $\mathcal J_{m-1}$
and concludes the proof of condition (c). The proof of condition (d) is analogous.
\end{proof}

\begin{proof}[Proof of Theorem~\ref{main}]
Using Lemmas~\ref{borboleta} and \ref{papa} we obtain that:
\[c(X^{n,\gamma}; F_{\mathcal G}^{n,\gamma}, F_{\mathcal H}^{n,\gamma}) \le n+\frac12.\]
The conclusion follows from Lemma~\ref{max}.
\end{proof}

\begin{rem}
In Theorem~\ref{teomainLindo}, if $K$ is in addition assumed to be separable, then we can improve the constant for the local extension property of $\clop(K)$.
Namely, we can obtain that $\clop(K)$ has $\LEP\big(M-\frac32\big)$. To see this, note that if $K$ is separable, then $\Gamma_0$ is countable and thus every uncountable subset of \eqref{escrito} contains a pair of distinct algebras $\mathcal B_{\mathcal G}$ and $\mathcal B_{\mathcal H}$ with $G_0=H_0$. Then it follows from Lemma~\ref{quebraatom} that $F_{\mathcal G}^{n,\gamma}(0)$ and $F_{\mathcal H}^{n,\gamma}(0)$ are empty for $n>0$ and hence the proof of Lemma~\ref{borboleta} yields that the point diagram $(X^{n,\gamma}; F_{\mathcal G}^{n,\gamma}, F_{\mathcal H}^{n,\gamma})$ has type $n-1$.
\end{rem}

\end{section}

\end{document}